\theoremstyle{plain}
\newtheorem{thm}{Theorem}
\newtheorem{lemma}{Lemma}
\newtheorem{prop}{Proposition}
\theoremstyle{definition}
\newtheorem{defn}{Definition}
\theoremstyle{remark}
\newtheorem{example}{Example}
\newtheorem{rmk}{Remark}
 \newcommand{\Pre}{{\rm{Pre}}}
 \newcommand{\Post}{{\rm{Post}}}
 \newcommand{\g}{\textsl{g}}
 \newcommand{\Z}{\mathbb{Z}}
 \newcommand{\cH}{\mathcal{H}}
\newcommand{\wi}{\hat {\imath}}
 \newcommand{\wj}{\hat {\jmath}}
 \newcommand{\wh}{\hat {h}}
 \newcommand{\cL}{\mathcal{L}}
\newcommand{\z}{\overline z}
 \newcommand{\w}{\overline w}
\newcommand{\R}{\mathbb{R}}{}
{}
\newcommand{\N}{\mathbb{N}}
\newcommand{\cA}{\mathcal{A}}
\newcommand{\cB}{\mathcal{B}}
\newcommand{\cC}{\mathcal{C}}
\newcommand{\cD}{\mathcal{D}}
\newcommand{\cK}{\mathcal{K}}
\newcommand{\cKL}{\mathcal{KL}}
\newcommand{\cN}{\mathcal{N}}
\newcommand{\cG}{\mathcal{G}}
\newcommand{\cS}{\mathcal{S}}
\newcommand{\cV}{\mathcal{V}}
\newcommand{\cP}{\mathcal{P}}
\newcommand{\cZ}{\mathcal{Z}}
\newcommand{\wt}{\widetilde}
\DeclarePairedDelimiterX{\inp}[2]{\langle}{\rangle}{#1, #2}
\title{Multiple Lyapunov Functions and Memory: A Symbolic Dynamics Approach to Systems and Control
}
\author{Matteo Della Rossa$^*$
  \and  ~Rapha\"el M. Jungers \thanks{R.J. is a FNRS honorary Research Associate. This project has received funding from the European Research Council (ERC) under the European Union's Horizon 2020 research and innovation programme under grant agreement No 864017 - L2C. R.J. is also supported by the Innoviris Foundation and the FNRS (Chist-Era Druid-net). The first and second authors contributed equally. \\The Authors are with the ICTEAM,
        UCLouvain, 4 Av. G. Lema\^{i}tre, 1348 Louvain-la-Neuve, Belgium. ).
        {\small \{matteo.dellarossa,} {\small raphael.jungers\}@uclouvain.be}}}
\begin{document}

\maketitle

\begin{abstract}
We propose a novel framework for the Lyapunov analysis of an important class of hybrid systems, inspired by the theory of symbolic dynamics and earlier results on the restricted class of switched systems. This new framework allows us
to leverage language theory tools in order to provide a universal characterization of Lyapunov stability for
this class of systems. 

We establish, in particular, a formal connection between multiple Lyapunov functions and techniques based on memorization and/or prediction of the discrete part of the state. This allows us to provide an equivalent (single) Lyapunov function, for any given multiple-Lyapunov criterion.

By leveraging our language-theoretic formalism, a new class of stability conditions is then obtained when considering both memory and future values
of the state in a joint fashion, providing new numerical schemes that outperform existing technique.
Our techniques are then illustrated on numerical examples.
\end{abstract}
\section{Introduction}
In recent years, particular attention has been devoted to the analysis of hybrid systems, both from a theoretical and application-oriented point of view.
In this context, the dynamical state exhibits (or is affected by) both continuous-time and discrete-time phenomena, rendering the (stability) analysis challenging, due to the composite nature of the system. For an overview, we refer to~\cite{goebel2012hybrid,Shorten2006}. Among the generic class of hybrid systems, a benchmark example is provided by \emph{switched systems} (for a formal introduction, see~\cite{Lib03,Liberzon99}). This setting can describe, in an equivalent manner, a  class of delay systems~\cite{HetelDaafouzIung08} as well as a large set of piecewise smooth dynamical systems~\cite{Bernardo2008}. Formally, given a finite set of vector fields $\{g_1,\dots, g_M\}\subset \cC(\R^n,\R^n)$, we consider  systems of the form
\begin{equation}\label{eq:SystemIntro}
x(k+1)=g_{\sigma(k)}(x(k)),\;\;\;\;\;\;k\in \N,
\end{equation}
where $\sigma:\N\to \{1,\dots, M\}$ is the \emph{switching signal} describing the discrete behavior, or switching, among the subsystems.
Framework~\eqref{eq:SystemIntro} provides a suitable mathematical model for large families of engineering systems, for example digital circuits~\cite{DeaGer10}, consensus dynamics~\cite{JadLin03}, smart buildings~\cite{ShaikhNoR}, etc.

Due to the presence of the switching signal in~\eqref{eq:SystemIntro}, several connections and relations between this framework and symbolic dynamics/graph theory~(see~\cite{LindMarcus95}) can be established. More formally, the (admissible) switching signals can be interpreted, in a more abstract setting, as elements of a shift-space over a finite alphabet. This approach has been used (although implicitly) to generalize and  abstract switched systems: as a few examples, in~\cite{AazanGir22,PEDJ:16} labeled graphs are used to constrain the set of admissible sequences, while in~\cite{CHITOUR2021101021} a graph-formalism is introduced to provide stability results.
More specifically, graph-theory has been successfully 
exploited in studying Lyapunov stability certificates for~\eqref{eq:SystemIntro}.  Indeed, due to the hybrid nature of the considered class of systems, the search for a \emph{common} Lyapunov function is usually a conservative approach, and graph theory provides a natural tool to generalize this method, via \emph{multiple} Lyapunov certificates (see~\cite{BRa:1998}).
In the seminal papers~\cite{LeeDull06,AJPR:14} it is shown that a graph structure can induce a set of Lyapunov inequalities implying stability of~\eqref{eq:SystemIntro}. For a converse result see~\cite{JunAhm17}, in which it is shown that graph-based Lyapunov conditions can encode \emph{any} valid stability certificate. This literature, studying (various types of) graph-based Lyapunov conditions, has been intensively developed in recent years, see for example~\cite{PEDJ:16,PhiAth19,DebDel22,DelPas22,DonDull20,EssickLee14,AazanGir22}.
In the aforementioned results, summarizing, graph structures are used as a technical tool to provide flexible representations of the admissible switching sequences and/or to encode Lyapunov inequalities. 

In parallel, in the study of general (hybrid) dynamical systems, it is rather common to introduce and model a concept of ``\emph{memory}'', which records the past values of the discrete part of the state, i.e., for~\eqref{eq:SystemIntro}, the past values of the switching signal. As first example, in~\cite{MoorRaish98,YangMoorRaish20, SchmTab15,MajuOzat2020} a concept of uniform memory is used to refine and analyze abstractions of (a class of) non-linear systems, using a finite-state machine approach. Moreover, in the context of behavioral analysis of dynamical systems (see, for example,~\cite{Willems1989,Will07}) the concept of memory or cause/effect abstracting structures is essential. 
As last example, the use of information on past and future values of the switching sequences, in the restricted context of switched systems, is introduced in~\cite{EssickLee14,Thesis:Essick}. 

The strong relations bending graph structures and admissible past/future discrete sequences is well-known in symbolic dynamics and abstract dynamical systems literature. In the seminal paper~\cite{DeBru46}, the so-called de Bruijn graphs were introduced to represent common pasts for sequences of letters in an alphabet. Since then, the bridge between admissible trajectories of systems evolving on a finite state space and admissible paths in a corresponding graph/state-machine representation has become a common topic of symbolic dynamics monographs, as for example~\cite{CassandrasLafortune08,LindMarcus95}. On the other hand, this connection between graph theory and past/future information about the dynamics of a system has not been made explicit in multiple Lyapunov functions theory. As a result, none of the contributions above exploits the full power of symbolic dynamics for building Lyapunov functions, leading to conservatism of the proposed techniques.


In this manuscript we study and make explicit the aforementioned relation, showing the equivalence (in a sense that we clarify) between graph- and memory/future-based Lyapunov functions criteria. We introduce a general hybrid systems model, encompassing classical settings, as, for example, time-varying systems, switched systems, systems depending on past values of discrete signals, etc. This framework allows us to provide novel converse Lyapunov results, generalizing existing stability characterizations. 
We then specialize these results to the case of finite-representation of languages/sequences, providing novel techniques in the field of graph-based Lyapunov conditions. In this context, we provide novel duality results, exploiting and clarifying the connections between past/future horizon, convex conjugation of functions and duality of linear systems. The numerical benefits of the introduced tools are then illustrated considering the stability analysis of linear switched systems.
\\
\noindent
\textbf{Outline:}
We first present preliminaries from graph- and language- theory in Section~\ref{sec:Prelim}.
Then, in Section~\ref{sec:MainSections}, starting from the partial analysis performed in the preliminary papers~\cite{DelRosJun23b,DelRosJun23a}, we consider a more abstract and more general formulation of systems as~\eqref{eq:SystemIntro}; modeling the (admissible) switching signals as additional states lying on a shift-space. This allows us to provide a general theory of \emph{sequence-dependent Lyapunov functions}, showing through a converse theorem that this formalism characterizes the stability property. The introduced framework also permits to consider, in a joint manner, conditions based on \emph{past and future} information, thus bypassing the classical dichotomy between memorization (of past events) and prediction (of future events), and thus generalizing the results presented in~\cite{LeeDull06,Thesis:Essick,DelRosJun23b,DelRosJun23a}. 

In Section~\ref{sec:GraphvsPart}, we then consider \emph{finite} coverings of shift spaces induced by graphs allowing us to provide algorithmically actionable Lyapunov criteria induced by coverings. We perform an in-depth analysis of the introduced formalism, connecting properties of graphs with properties of the corresponding coverings and the arising Lyapunov conditions.
Our approach not only formalizes and generalizes the above-mentioned classical techniques, but also  also opens an avenue for a systematic controller design technique for arbitrary dynamical systems, with the help of finite state machines, with promising applications in cyber-physical systems. In Section~\ref{sec:LinearAndNumerical}, we present some numerical examples, showing that the introduced framework also allows for a drastic reduction in computational complexity, which may be of an exponential factor.

\paragraph{Notation} The set of natural numbers is defined by $\N:=\{0,1,\dots\}$, while the set of negative integers is defined by $\Z_-:=\{-1,-2,\dots\}$. The set of non-negative reals is defined by $\R_{\geq 0}:=[0,+\infty)$. The set $\mathbb{S}^n_{+}$ is the set of positive definite matrices in $\R^{n\times n}$. Given a set $A$, $\cP(A)$ denotes its power set. A function $\alpha:\R_{\geq 0}\to \R_{\geq 0}$ is of \emph{class $\cK$} ($\alpha \in \cK$) if $\alpha(0)=0$, it is continuous and strictly increasing; it is of \emph{class $\cK_\infty$}($\alpha \in \cK_\infty$) if, in addition, it is unbounded. A continuous function $\beta:\R_+\times \R_+\to \R_+$ is of \emph{class $\mathcal{KL}$} if $\beta(\cdot,s)$ is of class $\cK$ for all $s$, and $\beta(r,\cdot)$ is decreasing and $\beta(r,s)\to 0$ as $s\to\infty$, for all $r$.

\section{Preliminaries}\label{sec:Prelim}
\subsection{Shift Spaces and Language Theory}
We provide here a concise review of the necessary concepts from language/symbolic dynamics theory. For an exhaustive overview of this topic, we refer to~\cite{LindMarcus95}.

Consider a countable set $\Sigma$, also called the \emph{alphabet}. Given any $K\in \N$ we denote by $\Sigma^K$ the $K$ cartesian product of $\Sigma$, i.e., $\Sigma^K:=\{(i_0,\dots, i_{K-1})\;\vert\; i_k\in\Sigma,\forall k\leq K\}$.
We have the following definition.
\begin{defn}
The \emph{full-$\Sigma$ shift space}, denoted by $\Sigma^\Z$, is the set of all the bi-infinite sequences of elements of $\Sigma$. Formally
\[
\Sigma^\Z:=\{\z=(z_i)_{i\in \Z}\;\,\vert\;\, z_k\in \Sigma,\;\forall k\in \Z\}.
\]
On the set $\Sigma^\Z$ we define the \emph{shift function} $\sigma:\Sigma^\Z\to \Sigma^\Z$ by
\[
\sigma:\z\mapsto \overline w,\text{ with } w_k=z_{k+1}.
\] 
\end{defn}
In the following we introduce some notation used in what follows.
\begin{defn} 
\begin{itemize}[leftmargin=*]  \setlength\itemsep{0em}
\item We define the \emph{past} and \emph{future} (one-sided) sequences of $\Sigma$ by
\[
\begin{aligned}
\Sigma^-&:=\{(\dots,z_{-2},z_{-1})\;\vert\; z_k\in \Sigma, \forall k \in \Z_-\},\\
\Sigma^+&:=\{(z_0,z_1,\dots)\;\vert\; z_k\in \Sigma, \forall k \in \N\}
\end{aligned}
\]
\item Given $\z=(\dots,z_{-1},z_0,z_1,\dots)\in \Sigma^\Z$, we write $\z=\z^-\cdot \z^+$ with $\z^-:=(\dots, z_{-1})\in \Sigma^-$ the \emph{past sequence} of $\z$ and $\z^+:=(z_0,z_1,\dots)\in \Sigma^+$ the \emph{future sequence} of $\z$. 
\item 
For any set $A\subseteq \Sigma^\Z$, we define $\Pre(A):=\{\z^-\in \Sigma^-\;\vert\;\z\in A\}$ and $\Post(A):=\{\z^+ \in \Sigma^+\;\vert\;\z\in A\}$.
\item A \emph{word} of $\Sigma$ is a finite sequence of symbols from $\Sigma$, and we use the notation $\wi:=(i_0,\dots, i_{k-1})\in\Sigma^*:=\bigcup_{k\in \N}\Sigma^k$ (the \emph{Kleene closure of $\Sigma$}). Given words $\wi,\wj\in \Sigma^\star$, $\wi\cdot\wj\in \Sigma^\star$ (or, for simplicity, $\wi\wj$) denotes the concatenation of $\wi$ and $\wj$. We denote by $|\wi|$ the \emph{length} of the word $\wi$, i.e. $|\wi|=k$ if $\wi \in \Sigma^k$.
  \item
Given a word $\wi \in \Sigma^k$ of length $k\neq 0$, and given $a,b\in \Z$ such that $b-a+1=k$ we define the set\footnote{ The set $[\wi]_{[a,b]}$ is sometimes called a ``cylinder'' associated to $\wi$, for example in~\cite[Section 6.1]{LindMarcus95}.}
\[
[\wi]_{[a,b]}:=\{\overline w\in \Sigma^\Z\;\vert\;\;\;w_a=i_0,\dots,w_b=i_{k-1}\}.
\]

\item Consider the \emph{time-inversion function} $\eta:\Z\to\Z$ defined by $\eta(k)=-1-k$. Given $\z=(\dots,j_{-1},j_{0},j_{1})\in \Sigma^\Z$, we denote the \emph{time-inverse} of $\z$, by $\z^{-1}=(\dots, h_{-1},h_0,h_1\dots)$ with $h_{k}=j_{\eta(k)}$ for every $k\in \Z$.
\end{itemize}
\end{defn}

\subsection{Graph-Theory}
In this subsection we recall the necessary definitions from graph theory.
\begin{defn}[(Labeled) Graphs]
Given an alphabet $\Sigma$, a (labeled) graph $\cG$ on $\Sigma$ is defined by $\cG=(S,E)$, where $S$ is a finite \emph{set of nodes} and $E\subseteq S\times S\times \Sigma$ is the \emph{set of (labeled) edges}.
Given $e=(s,q,i)\in E$, $s$ and $q$ are the \emph{starting} and \emph{arrival} node of $e$, respectively, while, the projection function $\ell:E\to \Sigma$ is called the \emph{labeling} function. Two edges $e=(s_1,q_1,i_1)\in E$ and $f=(s_2,q_2,i_2)\in E$ are said to be \emph{consecutive} if $q_1=s_2$.
\end{defn}
\begin{defn}[Paths and Infinite Walks]
Given $\cG=(S,E)$ and a word $\wi=(i_0,\dots, i_{K-1}) \in \Sigma^K$, a \emph{path} on $\cG$ labeled by $\wi$  is a sequence of consecutive edges $\overline e=e_1,\dots,e_K=(s_0,s_1,i_0),(s_1,s_2,i_1)\dots, (s_{K-1},s_{K},i_{K-1})\in E^K$ labeled by $\wi$; $s_0$ and $s_K$ are the starting and arrival nodes of the path, respectively.\\ Given $\z=(\dots, z_{-1},z_0,z_1,\dots)\in \Sigma^\Z$ a \emph{(bi-)infinite walk} labeled by $\z$ is a bi-infinite sequence of consecutive edges $\overline \pi=(\dots,e_{-1},e_{0},e_{1}\dots)\in E^\Z$, such that $\ell(e_k)=z_k$ for all $k\in \Z$. Given a (bi-)infinite walk $\overline \pi=(\dots,e_{-1},e_{0},e_{1}\dots)$ we say that $s\in S$ is its \emph{initial node} if $e_1=(s,q,i)$ for some $q\in S$ and $i\in \Sigma$. In a similar manner, we define \emph{backward and forward one-sided infinite walks}.
\end{defn}
We also introduce the following notation.
\begin{defn}
Given a graph $\cG=(S,E)$ on $\Sigma$, we say that:
\begin{itemize}[leftmargin=*]  \setlength\itemsep{0em}
\item $\cG$ is \emph{strongly connected} if for any $s,q\in S$ there exists a path $\overline e$ starting at $s$ and arriving at $q$;
\item $\cG$ is \emph{deterministic}, if, for any $s\in S$, and any $i\in \Sigma$ there is \emph{at most} one $q\in S$ such that $(s,q,i)\in E$;
\item $\cG$ is \emph{complete}\footnote{\textcolor{black}{The introduced notion of completeness arises from automata theory, and should not be confused with the classical notion of  completeness of graphs requiring the existence of any possible edge, i.e. $E=S\times S\times \Sigma$.}} if, for any $s\in S$, and any $i\in \Sigma$ there is \emph{at least} one $q\in S$ such that $(s,q,i)\in E$;
\item $\cG^\top=(S^\top,E^ \top)$ defined by $S^\top \equiv S$, and $(s,q,i)\in E \,\Leftrightarrow\,(q,s,i)\in E^\top$, is the \emph{transpose graph} of $\cG$, i.e. the graph obtained by reversing the direction of the edges of $\cG$;
\item $\cG$ is \emph{co-deterministic}, if, for any $q\in S$, and any $i\in \Sigma$ there is \emph{at most} one $s\in S$ such that $(s,q,i)\in E$; or, equivalently, if $\cG^\top$ is deterministic;
\item $\cG$ is \emph{co-complete} if, for any $q\in S$, and any $i\in \Sigma$ there is \emph{at least} one $s\in S$ such that $(s,q,i)\in E$;
or, equivalently, if $\cG^\top$ is complete.
\end{itemize} 

\end{defn}

\subsection{Sofic Shifts and Graph Presentations}
The content and notation of this subsection, in which we introduce and study a sub-class of subsets of the full shift $\Sigma^\Z$, is borrowed from~\cite{LindMarcus95}.

\noindent 
Given a labeled graph $\cG=(S,E)$ we define $\cZ(\cG)\subseteq \Sigma^\Z$ by
\begin{equation}\label{eq:LanguageGraph}
\cZ(\cG)=\left \{\z\in \Sigma^\Z\;\vert\;\;\exists \text{ bi-infinite walk } \overline \pi \text{ in } \cG \text{ labeled by } \z\right \}.\tag{1a}
\end{equation}
Moreover given any $\cG=(S,E)$ on $\Sigma$ and any $s\in S$, we define
\begin{equation}\label{eq:LanguagePath-Node}
\cZ(\cG,s)=\left \{\z\in \Sigma^\Z\;\vert\;\;\exists \text{ bi-infinite walk } \overline \pi \text{ in } \cG \text{ labeled by } \z\, \text{ starting at } s\right \}.\tag{1b}
\end{equation}
\refstepcounter{equation}
\begin{defn}[Sofic Shifts]
A set $Z\subseteq\Sigma^\Z$ is a \emph{sofic shift} if there exists a graph $\cG$ on $\Sigma$ such that
\[
Z=\cZ(\cG).
\]
In this case we say that $\cG$ is a \emph{presentation of} $Z$.
\end{defn}

For any non-empty sofic shift there is an infinite number of possible presentations.
It is important to note that any sofic shift is uniquely determined by its finite sub-sequences, as we define in what follows.
\begin{defn}[Language generated by a Sofic Shift]
Given a sofic shift $Z\subseteq\Sigma^\Z$, let us define $\cL(Z)\subseteq \Sigma^*$ as the set of all the possible sub-words of elements in $Z$, i.e.
\[
\wi \in \Sigma^K\cap\cL(Z)\;\Leftrightarrow\; [\wi]_{[0,K-1]}\cap Z\neq \emptyset.
\] 
\end{defn}
We have the following characterization and properties of languages generated by a sofic shift.
\begin{lemma}
A language $\cL\subseteq \Sigma^\star$ is generated by a sofic shift $Z\subseteq \Sigma^\Z$ if and only if it is a \emph{regular language}\footnote{We refer to~\cite{CassandrasLafortune08} for the formal definition. Intuitively, a language is regular if it is accepted by a deterministic finite automaton.}. In particular, given a sofic shift $Z$ and $\cL(Z)$, we have the following properties:
\begin{itemize}[leftmargin=*]  \setlength\itemsep{0em}
\item For any $\wi \in \cL(Z)$ and any sub-word $\wj$ of $\wi$, we have $\wj\in \cL(Z)$;
\item For any $\wi\in \cL(Z)$ there exist non-empty words $\wj,\wh\in \cL(Z)$ such that $\wj\cdot\wi\cdot\wh\in \cL(Z)$.
\end{itemize} 
\end{lemma}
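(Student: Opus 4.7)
The plan is to prove the equivalence in two directions, treating regularity as the primary content and the two bullet properties as by-products of the construction.

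For the forward direction, I would start from a presentation $\cG=(S,E)$ of $Z$ and first reduce to its \emph{essential subgraph} $\cG'$, obtained by iteratively discarding nodes that have no outgoing edge or no incoming edge. Since $S$ is finite this process terminates, and one checks that $\cZ(\cG')=\cZ(\cG)=Z$ because any bi-infinite walk in $\cG$ avoids the discarded nodes, while every node in $\cG'$ lies on some bi-infinite walk. The point of the reduction is that $\cL(Z)$ then coincides \emph{exactly} with the set of labels of finite paths in $\cG'$: every finite path extends on both sides to a bi-infinite walk, so its label lies in $\cL(Z)$; conversely, every sub-word of an element of $Z$ is by definition the label of a finite portion of a bi-infinite walk. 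Viewing $\cG'$ as a nondeterministic finite automaton with all states both initial and accepting, the set of path labels is a regular language, which yields the forward implication.

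For the backward direction, given a regular language $\cL$ that satisfies the two sub-word and two-sided extension properties, I would take a finite automaton $M=(Q,\delta,q_0,F)$ recognizing $\cL$ and transform it into a labeled graph $\cG$. The natural candidate is to let the nodes be $Q$ and the labeled edges be the transitions of $\delta$, then restrict to the essential subgraph as above. Using the two properties of $\cL$ one verifies that a word belongs to $\cL$ exactly when it labels a finite path in this essential subgraph (the extension property ensures enough of $M$ survives the pruning), so that $\cL=\cL(\cZ(\cG))$, exhibiting $Z:=\cZ(\cG)$ as the desired sofic shift.

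Finally, the two listed properties are almost free: sub-word closure follows because any sub-word $\wj$ of $\wi\in\cL(Z)$ remains a sub-word of the same bi-infinite sequence witnessing $\wi\in\cL(Z)$; two-sided extendability follows because, by definition of $\cL(Z)$, the word $\wi$ appears inside some $\z\in Z$ which is bi-infinite, so one can pick any non-empty prefix and suffix of $\z$ around $\wi$ to play the roles of $\wj$ and $\wh$.

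The main obstacle, in my view, is the backward direction: one must argue carefully that after restricting the automaton $M$ to its essential subgraph the \emph{language} is preserved. Here the two extension properties of $\cL$ are essential, and pinning down the right pruning (removing non-co-accessible and non-accessible states, but doing so compatibly with bi-infinite extendability rather than mere acceptance) is the delicate step that distinguishes this construction from the classical translation between regular languages and finite automata.
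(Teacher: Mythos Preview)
The paper does not supply a proof of this lemma: it appears in the preliminaries as a known fact, with the reader implicitly referred to \cite{LindMarcus95,CassandrasLafortune08}. So there is no paper argument to compare against, and your proposal stands on its own.

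Your forward direction and the two bullet points are correct and standard. The backward direction, however, as written in your main outline, has a genuine gap. Simply forgetting the initial and final data of a DFA $M$ for $\cL$ and then passing to the essential subgraph (iteratively removing nodes with no incoming or no outgoing edge) does \emph{not} recover $\cL$. Over $\Sigma=\{a,b\}$, the complete DFA for the golden-mean language has a rejecting sink carrying both self-loops; this sink has incoming and outgoing edges, so it survives your essentialization, and the resulting path language is all of $\Sigma^\star$, strictly larger than $\cL$. The repair is precisely the one you gesture at in your last paragraph, but it must be done \emph{before} discarding $q_0$ and $F$: first trim $M$ (keep only states that are accessible from $q_0$ and co-accessible to $F$), and only then view it as an unpointed labeled graph. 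With a trim $M$, any path from $p$ to $q$ labeling $w$ extends to an accepted run $uwv$ via accessibility/co-accessibility, whence $w\in\cL$ by factoriality; conversely every $w\in\cL$ labels the run from $q_0$. Extendability of $\cL$ together with determinism then guarantees that every surviving state has both an incoming and an outgoing edge, so finite paths extend to bi-infinite walks and $\cL(\cZ(\cG))=\cL$.

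One further remark: the lemma's ``if'' direction should be read together with the two bulleted properties as hypotheses; a regular language that fails factoriality or two-sided extendability (e.g.\ the singleton $\{a\}$) is not of the form $\cL(Z)$, so your instinct to invoke those properties in the backward construction is exactly right.
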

We have the following important equivalence result.
\begin{lemma}\label{Lemma:EquivalenceLanguage}
Given sofic shifts $Z_1,Z_2\subseteq \Sigma^\Z$, we have $\cL(Z_1)=\cL(Z_2)$ if and only if $Z_1=Z_2$, i.e. a sofic shift is uniquely determined by its generated language.
\end{lemma}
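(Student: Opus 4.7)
The forward implication is immediate from the definition: if $Z_1=Z_2$, then $[\wi]_{[0,K-1]}\cap Z_1=[\wi]_{[0,K-1]}\cap Z_2$ for every word $\wi$, so $\cL(Z_1)=\cL(Z_2)$. So the plan is to focus on the converse. By symmetry it suffices to show $\cL(Z_1)\subseteq\cL(Z_2) \Rightarrow Z_1\subseteq Z_2$, i.e.\ that a sofic shift contains every bi-infinite sequence whose finite sub-words all lie in its language.

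Fix $\z\in Z_1$ and a finite presenting graph $\cG_2=(S_2,E_2)$ of $Z_2$. The core idea is a standard compactness/König's-lemma argument. For each $K\in\N$, the sub-word $(z_{-K},\dots,z_{K})$ belongs to $\cL(Z_1)=\cL(Z_2)$, so by the characterization of $\cL(Z_2)$ together with shift-invariance of $Z_2$, there exists a bi-infinite walk $\overline\pi^{(K)}=(\dots,e^{(K)}_{-1},e^{(K)}_{0},e^{(K)}_{1},\dots)\in E_2^\Z$ whose labels agree with $\z$ on the window $[-K,K]$, i.e.\ $\ell(e^{(K)}_k)=z_k$ for $|k|\leq K$. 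The sequence $(\overline\pi^{(K)})_{K\in\N}$ lives in the set $E_2^\Z$, which is compact in the product topology because $E_2$ is finite.

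Extract a subsequence $\overline\pi^{(K_j)}$ converging coordinate-wise to some $\overline\pi^\star=(\dots,e^\star_{-1},e^\star_0,e^\star_1,\dots)\in E_2^\Z$ (concretely, by a diagonal procedure on coordinates $k=0,\pm1,\pm2,\dots$, using finiteness of $E_2$ at each step). Three closed conditions pass to the limit: (i) each $e^\star_k\in E_2$; (ii) consecutive edges remain consecutive, since the arrival node of $e^{(K_j)}_k$ equals the starting node of $e^{(K_j)}_{k+1}$ for every $j$ with $K_j\geq |k|+1$; (iii) $\ell(e^\star_k)=z_k$ for every $k$, since for $j$ large enough $|k|\leq K_j$ and hence $\ell(e^{(K_j)}_k)=z_k$. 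Therefore $\overline\pi^\star$ is a legitimate bi-infinite walk in $\cG_2$ labeled by $\z$, so $\z\in\cZ(\cG_2)=Z_2$, as desired.

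The only genuinely delicate point is the diagonal/compactness extraction; this is a routine argument, but it relies crucially on the presenting graph being finite (so that each coordinate ranges over a finite set). No more than the fact that the set of walks in a finite graph is a closed subshift of $E_2^\Z$ is needed; shift-invariance of $Z_2$ is used only to translate an arbitrary window into position $[0,K-1]$ when invoking the definition of $\cL(Z_2)$.
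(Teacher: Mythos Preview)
Your argument is correct. The paper itself does not supply a proof of this lemma; it simply defers to \cite[Proposition~1.3.4]{LindMarcus95}, so there is no in-paper argument to compare against directly. The cited result in Lind--Marcus is stated for arbitrary shift spaces (closed, shift-invariant subsets of $\Sigma^\Z$) and proceeds slightly differently: if every finite sub-word of $\z$ lies in $\cL(Z_2)$, then every basic cylinder neighbourhood of $\z$ meets $Z_2$, so $\z$ lies in the closure of $Z_2$, which equals $Z_2$ by closedness. Your route is a legitimate variant tailored to the paper's graph-based definition of sofic shifts: instead of invoking closedness of $Z_2\subseteq\Sigma^\Z$, you lift the compactness argument to the edge-sequence space $E_2^\Z$ of a finite presentation and extract a limiting bi-infinite walk. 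This has the mild advantage of not requiring a separate verification that $\cZ(\cG_2)$ is closed in $\Sigma^\Z$. One small remark: you also use shift-invariance of $Z_1$, not only of $Z_2$, when asserting that $(z_{-K},\dots,z_K)\in\cL(Z_1)$ under the paper's position-specific definition of the language; this is harmless since sofic shifts are shift-invariant, but your closing sentence slightly undersells where shift-invariance enters.
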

For the proof we refer to~\cite[Proposition 1.3.4]{LindMarcus95}. As an example, we have that $\cL(\Sigma^\Z)=\Sigma^*$.
\begin{defn}\label{defn:Irreducible}
A sofic shift $Z\subseteq \Sigma^\Z$ is \emph{irreducible} if, for every $\wi,\wj\in \cL(Z)$ there exists a $\wh\,\in \cL(Z)$ such that $\wi\,\wh\wj\in \cL(Z)$, or equivalently, if it has a strongly connected presentation (see \cite[Proposition 3.3.11]{LindMarcus95}).
\end{defn}

\section{Dynamical Systems and Sequence-dependent Lyapunov Functions}\label{sec:MainSections}
\subsection{System Definition and General Converse Lyapunov Theorems}
In this section we introduce dynamical systems that \emph{jointly} evolve on a continuous state-space ($\R^n$ for some $n$) and on a sofic shift. This family includes several classical models in systems and control, such as switched systems, or time-varying systems. We define the considered notion of stability and we provide the corresponding Lyapunov characterization.
\begin{defn}\label{defn:System}
 Given any sofic shift $Z\subseteq \Sigma^\Z$ and a function $f:\R^n\times Z\to \R^n$  we study dynamical systems evolving on $\R^n\times Z$, defined as follows:
\begin{equation}\label{eq:System}
\begin{cases}
x(0)=x_0\in \R^n,\\
{}\omega(0)=\overline z\in Z,\\
x(k+1)=f(x(k),\omega(k)),\\
\omega(k+1)=\sigma(\omega(k)).
\end{cases}
\end{equation}
If there exists $\wt f:\R^n\times \Sigma\to \R^n$ such that
\[
f(x,\z)=\wt f(x,z_0),\;\;\;\forall \;\z\in Z,
\] 
i.e. if the vector field only depends on the $0$-position element of any point in $\z\in Z$ (i.e. the ``current value'' of $\z$), then the system is said to be a \emph{switched system}. 
\end{defn}
We denote by $\Phi(k,x_0,\z)\in \R^n$ the \emph{state-solution} of \eqref{eq:System} evaluated at time $k\in \N$. Note that the state-solution of~\eqref{eq:System} satisfies the following semigroup property: for any $x_0\in \R^n$, any $0\leq h\leq k\in \N$ and any $\z\in Z$, we have
\begin{equation}\label{eq:SemiGroupProperty}
\Phi(k,x_0,\z)=\Phi(k-h,\Phi(h,x_0,\z),\sigma^h(\z)).
\end{equation}
Moreover, we also define the backward state-solution set, for negative $k$, by
\begin{equation}\label{eq:BackwardSolution}
\Phi(k,x_0,\z):=\{y\in \R^n\;\vert\; \Phi(-k, y, \sigma^{k}(\z))=x_0\}, \;\;\forall k\in \Z_{-}.
\end{equation}
Note that if $f(\cdot,\z):\R^n\to \R^n$ is invertible for any $\z \in Z$, then $\Phi(k,x_0,\z)$ is a singleton, for any $\z\in Z$, any $x_0\in \R^n$ and any $k\in \Z_-$.

\begin{rmk}[Systems Class and Related Models]\label{remark:ModelSystems}
The framework~introduced in Definition~\ref{defn:System} provides a rather general model. 
First of all, for the switched systems case, we recover the systems class~\eqref{eq:SystemIntro} discussed in the Introduction, simply defining $g_i(x):=f(x,i)$. The sofic shift $Z$  encodes, in this case, the constraints on the feasible switching sequences. We thus recover the framework studied in~\cite{PEDJ:16}, and, as a by product, also the class of discrete-time delay systems, which can be rewritten, via a state augmentation technique, in the form~\eqref{eq:System} (cft.~\cite{HetelDaafouzIung08} and references therein).

On the other hand, Definition~\ref{defn:System} generalizes the framework of switched systems, since the vector field can depend, in the non-switched case, not only on the current ``mode'' but also on previous/future values (since in general, it depends on bi-infinite sequences). This is notably the case when one considers the \emph{stabilization} problem for switched systems in terms of feedback maps depending on the past/future values of the switching signals, as for example in~\cite{EssickLee14,LeeDull06,DonDull20} and references therein.
Moreover, the setting of \emph{time-varying systems} can also be seen as a special case of Definition~\ref{defn:System}. Indeed, consider the alphabet $\Sigma=\{\circ,\bullet\}$ and the sofic shift $Z_\Z$ generated by the graph $\cG_\Z$ in Figure~\ref{fig:IntegerGraph}. It is clear that $Z_\Z$ is the set of bi-infinite sequences with at most one occurrence of the symbol ``$\circ$''.  Let us call $\z_\infty\in Z_\Z$ the bi-infinite sequence with \emph{no} occurrence of $\circ$. There exists a bijection $\Theta:Z_\Z\to \Z\cup\{\infty\}$ defined by 
\[
\begin{aligned}
&\Theta(\overline z_\infty):=\infty, \text{ otherwise}\\
&\Theta(\overline z):=1-k, \text{ for the unique }k\in \Z\;\text{ such that } \;z_k=\circ.
\end{aligned}
\]
It can be seen that we have $\Theta(\sigma(\z))=\Theta(\z)+1$, (with the convention that $\infty+1=\infty$) and thus the shift on $Z_\Z$ corresponds to the usual time-shift in $\Z$.
Due to this bijection, given any $f:\Z\times \R^n\to \R^n$, the time-varying system of the form
\[
x(k+1)=f(k,x(k)),
\]
can be seen and studied as a dynamical system on $\R^n\times Z_\Z$, in the sense of Definition~\ref{defn:System} (by defining a trivial dynamics $f(\cdot,\z_\infty)\equiv 0$).
For this reason, some of the proofs presented in what follows are partially inspired by and can be seen as generalization of results in the context of time-varying systems. In particular, our subsequent Theorems~\ref{lemma:ConverseNonLinear} and~\ref{Lemma:ConverseLinear} will recover and generalize classical statements for \emph{uniform} asymptotic stability of time-varying systems, as the ones in the seminal~\cite{Hahn1958}, see also~\cite{IchKat01,JiaWan02,KelTel04}.
\end{rmk}

\begin{figure}[t!] 
  \centering
  \vspace{0.0cm}
    \begin{tikzpicture}    [>=stealth,
    shorten >=1pt,
    node distance=2.5cm,
    on grid,
    auto,
    every state/.style={draw=red!60, fill=red!5, thick}
    ]
\node[state,inner sep=1pt, minimum size=22pt] (left)                  {$p$};
\node[state,inner sep=1pt, minimum size=22pt] (right) [right=of left] {$f$};
\path[->]
   (left) edge[bend left=10]     node          [scale=1]            {$\circ$} (right)
   (left) edge[loop left=60]     node            [scale=1]          {$\bullet$} (left)
   (right) edge[loop right=60]     node      [scale=1]                {$\bullet$} (right)
   ;
\end{tikzpicture}
\caption{The graph $\cG_\Z$, presentation of the shift $Z_\Z$, defined in Remark~\ref{remark:ModelSystems}.}\label{fig:IntegerGraph}
  \end{figure}
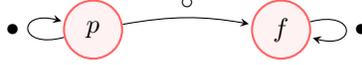

Since we are interested in stability of~\eqref{eq:System} with respect to a point in $\R^n$ (w.l.o.g., the origin), we introduce the following definition.

\begin{defn}[Global Uniform Asymptotic Stability]\label{defn:GUAS}
Given any sofic shift $Z\subseteq \Sigma^\Z$ and any $f:\R^n\times Z\to \R^n$, system~\eqref{eq:System} is said to be \emph{globally uniformly  asymptotically stable} (GUAS) if there exists $\beta\in \cKL$ such that
\[
|\Phi(k,x_0,\z)|\leq \beta(|x_0|,k),\;\;\;\forall k \in \N,\;\forall x_0\in \R^n, \;\forall \z\in Z.
\]
It is said \emph{uniformly exponentially stable} (UES) if there exist $M>0$ and $\gamma\in [0,1)$ such that
\begin{equation}\label{eq:ExponentialStabilityIneq}
|\Phi(k,x_0,\z)|\leq M \gamma^k|x_0|\;\;\forall k \in \N,\;\forall x_0\in \R^n, \;\forall \z\in Z.
\end{equation}
In this case the scalar $\gamma$ is called a \emph{decay rate} of the system.
\end{defn}
We now state the Lyapunov characterization of the GUAS for sequence-dependent dynamical systems.
\begin{defn}[Sequence-Dependent Lyapunov functions]\label{defn:SequenceDepLyapFunct}
A function $\cV:\R^n\times Z\to \R$ is a \emph{sequence-dependent Lyapunov function} \emph{(sd-LF)} for system~\eqref{eq:System} if  there exist $\alpha_1,\alpha_2\in \cK_\infty$, and $\gamma\in [0,1)$ such that
\begin{subequations}
\begin{equation}\label{Eq:Sandwich1}
\alpha_1(|x|)\leq \cV(x,\z)\leq \alpha_2(|x|),\;\;\forall x\in \R^n,\;\forall\, \z\in Z,
{}\end{equation}
\begin{equation}\label{eq:Decreasing1}
\cV(f(x,\z),\sigma(\z))\leq \gamma \cV(x,\z),\;\;\forall x\in \R^n,\;\forall\, \z\in Z.
\end{equation}
\end{subequations}
\end{defn}

\begin{thm}[Converse Lyapunov Result: Non-linear case]\label{lemma:ConverseNonLinear}
Consider any sofic shift $Z\subseteq \Sigma^\Z$ and any $f:\R^n\times Z\to \R^n$.
System~\eqref{eq:System} is GUAS (in the sense of Definition~\ref{defn:GUAS}) if and only if there exists a sd-LF for system~\eqref{eq:System}.
\end{thm}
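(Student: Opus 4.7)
The sufficiency direction is the easy half. Assuming $\cV$ is an sd-LF, I would iterate the decrease inequality \eqref{eq:Decreasing1} along the trajectory, using the fact that if $\omega(0)=\z$ then $\omega(k)=\sigma^k(\z)\in Z$ (the sofic shift is shift-invariant); this yields $\cV(\Phi(k,x_0,\z),\sigma^k(\z))\leq \gamma^k\cV(x_0,\z)$. Combining with the sandwich inequality \eqref{Eq:Sandwich1} applied on both sides gives $|\Phi(k,x_0,\z)|\leq \alpha_1^{-1}(\gamma^k\alpha_2(|x_0|))$, which is of class $\cKL$ and uniform in $\z\in Z$, hence GUAS.

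For the converse, I would adopt a Massera-type construction in the spirit of the classical results on uniform asymptotic stability alluded to in Remark~\ref{remark:ModelSystems}. The first step is to invoke Sontag's $\cKL$-lemma: given the GUAS bound with $\beta\in\cKL$, there exist $\theta_1,\theta_2\in\cK_\infty$ such that $\theta_2^{-1}(\beta(r,s))\leq \theta_1(r)\,e^{-s}$ for all $r,s\geq 0$. This essentially reduces the non-exponential case to an exponential one. I would then define the candidate
\[
\cV(x,\z)\,:=\,\sup_{k\geq 0}\ \theta_2^{-1}\bigl(|\Phi(k,x,\z)|\bigr)\,e^{k/2},
\]
where the supremum is taken over the forward trajectory from $(x,\z)$, and the weight $e^{k/2}$ is chosen so as to leave a strict margin after using the GUAS estimate.

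Verifying Definition~\ref{defn:SequenceDepLyapFunct} is then a matter of three short checks. The lower sandwich bound $\theta_2^{-1}(|x|)\leq \cV(x,\z)$ is obtained by taking $k=0$; the upper bound follows by substituting the GUAS estimate, since each term is at most $\theta_1(|x|)e^{-k}\,e^{k/2}=\theta_1(|x|)e^{-k/2}\leq \theta_1(|x|)$, so that $\cV(x,\z)\leq \theta_1(|x|)$. For the decrease, the semigroup property \eqref{eq:SemiGroupProperty} gives $\Phi(k,f(x,\z),\sigma(\z))=\Phi(k+1,x,\z)$, so after reindexing $j=k+1$ one obtains $\cV(f(x,\z),\sigma(\z))\leq e^{-1/2}\cV(x,\z)$, i.e.\ \eqref{eq:Decreasing1} with $\gamma=e^{-1/2}<1$. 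Setting $\alpha_1:=\theta_2^{-1}$, $\alpha_2:=\theta_1$ concludes.

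I expect the main conceptual obstacle to be not the construction itself, which is by now classical in the time-varying setting, but the delicate issue of \emph{uniformity across $\z\in Z$}: the finiteness of the sup and the value of the decay rate $\gamma$ must not depend on $\z$. This is ultimately inherited from the fact that the $\cKL$ function $\beta$ in Definition~\ref{defn:GUAS} is itself uniform in $\z$, and from the shift-invariance of sofic shifts, which guarantees that $\sigma(\z)\in Z$ and hence that \eqref{eq:Decreasing1} is meaningful at every $(x,\z)\in\R^n\times Z$. Note that no continuity of $\cV$ is required by Definition~\ref{defn:SequenceDepLyapFunct}, which is fortunate since a supremum of continuous functions is only lower semicontinuous in general.
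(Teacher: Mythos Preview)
Your proposal is correct and follows essentially the same route as the paper: the sufficiency argument is identical, and for the converse you both invoke Sontag's $\cKL$-decomposition and then define the Lyapunov function as a weighted supremum over the forward trajectory (the paper writes $\cV_+(x,\z)=\sup_{k\in\N}\gamma^{-k}\rho_1^{-1}(|\Phi(k,x,\z)|)$, which is your construction up to the cosmetic choice of the decay base and the form of Sontag's lemma). The only addition in the paper is a second, \emph{memory-based} construction $\cV_-$ built from an infimum over backward preimages, offered as an alternative but not needed for the proof itself.
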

Theorem~\ref{lemma:ConverseNonLinear} provides a complete characterization of GUAS of systems as in~\eqref{eq:System} in terms of Lyapunov functions, as in the classical discrete-time non-linear systems case (see for example~\cite{KelTel04}). In other words, we have proven that the notion of sequence-dependent Lyapunov function introduced in Definition~\ref{defn:SequenceDepLyapFunct} is the most appropriate in studying stability of~\eqref{eq:System}. 
Although Theorem~\ref{lemma:ConverseNonLinear} is new, its proof substantially follows classical arguments in Lyapunov theory; we present a complete proof in Appendix~\ref{appendix:ProofConverseNonLinear}. Recalling the discussion in Remark~\ref{remark:ModelSystems}, we note that in the time-varying systems case, we recover, as particular case, the converse result in~\cite[Lemma 2.7]{JiaWan02} (in the case with zero disturbance).
\noindent
\paragraph{Linear case:}
When the function $f:\R^n \times Z \to \R^n$ is linear in $x$, i.e. in the case $f(x,\z)=A(\z)x$ with $A:Z\to \R^{n\times n}$ a matrix-valued map, we call~\eqref{eq:System} a \emph{(sequence-dependent) linear dynamical system}. From a classical homogeneity argument, it can be seen that, for linear dynamical systems, GUAS is equivalent to UES, see for example~\cite{Rosier92,BacRosier}. Moreover, we can refine Theorem~\ref{lemma:ConverseNonLinear} considering \emph{quadratic} signal-dependent Lyapunov functions, as stated below.
\begin{thm}[Converse Lyapunov Result: Linear case]\label{Lemma:ConverseLinear}
Consider a sofic shift $Z\subseteq \Sigma^\Z$, let us fix $\gamma\in [0,1)$. The following statements are equivalent:
\begin{enumerate}[leftmargin=0.65cm]
\item[$(1)$] For any $\wt \gamma\in (\gamma,1)$ the linear system~\eqref{eq:System} with $f(x,\z)=A(\z)x$ with $A:Z\to \R^{n\times n}$ is UES (on $Z$) with decay rate $\wt \gamma$;
\item[$(2)$] For any $\wt \gamma\in (\gamma,1)$, there exist $M_1,M_2>0$ and $Q:Z\to \mathbb{S}^n_{+}$ such that
\begin{subequations}
\begin{equation}\label{eq:SandwichLinear}
 M_1I_n\preceq Q(\z)\preceq M_2 I_n,\;\;\;\forall\;\z\in Z,
\end{equation}
\begin{equation}\label{eq:PseudoRiccatti}
A(\z)^\top Q(\sigma(\z))A(\z)\prec \wt \gamma^2\, Q(\z)\;\;\;\forall\;\z \in Z.
\end{equation}
\end{subequations}
\end{enumerate}
\end{thm}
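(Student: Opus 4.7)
The two implications split the proof. The direction $(2) \Rightarrow (1)$ is a routine Lyapunov argument: given the data $(Q, M_1, M_2)$ at some $\tilde\gamma$, set $V(x, \z) := x^\top Q(\z) x$. The sandwich~\eqref{eq:SandwichLinear} gives $M_1 |x|^2 \leq V(x, \z) \leq M_2 |x|^2$, and~\eqref{eq:PseudoRiccatti} gives $V(A(\z) x, \sigma(\z)) \leq \tilde\gamma^2 V(x, \z)$. Iterating this via the semigroup property~\eqref{eq:SemiGroupProperty} and inverting the sandwich yields~\eqref{eq:ExponentialStabilityIneq} with $M = \sqrt{M_2/M_1}$ and decay rate $\tilde\gamma$.

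For $(1) \Rightarrow (2)$ the plan is to construct $Q$ explicitly as a weighted sum of transition-operator Gramians. Fix any $\tilde\gamma \in (\gamma, 1)$ and pick, using the flexibility in~$(1)$, an intermediate $\hat\gamma$ with $\gamma < \hat\gamma < \tilde\gamma$; by hypothesis there exists $M>0$ with $|\Phi(k,x_0,\z)| \leq M \hat\gamma^k |x_0|$ uniformly over $\z \in Z$. Writing $\Phi(k,x_0,\z) = \Psi_k(\z) x_0$ for the $k$-step transition matrix $\Psi_k(\z) := A(\sigma^{k-1}(\z)) \cdots A(\z)$ (with $\Psi_0(\z) := I_n$), define
\begin{equation*}
Q(\z) := \sum_{k=0}^{\infty} \tilde\gamma^{-2k}\, \Psi_k(\z)^\top \Psi_k(\z).
\end{equation*}

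To verify~$(2)$: uniform summability and the upper bound $Q(\z) \preceq M_2 I_n$ follow from the operator-norm estimate $\|\Psi_k(\z)^\top \Psi_k(\z)\| \leq M^2 \hat\gamma^{2k}$, which is a geometric series in the ratio $(\hat\gamma/\tilde\gamma)^2 < 1$; the lower bound $Q(\z) \succeq I_n$ (so $M_1 = 1$) comes from keeping only the $k=0$ term. The Riccati-type decrease rests on the cocycle identity $\Psi_{k+1}(\z) = \Psi_k(\sigma(\z))\, A(\z)$, which, after re-indexing, yields
\begin{equation*}
A(\z)^\top Q(\sigma(\z)) A(\z) \,=\, \tilde\gamma^{2}\bigl( Q(\z) - I_n \bigr) \,\prec\, \tilde\gamma^2 Q(\z),
\end{equation*}
and this is exactly~\eqref{eq:PseudoRiccatti} with strict inequality.

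The point requiring attention, and which I expect to be the main obstacle, is precisely the need for the strict separation $\hat\gamma < \tilde\gamma$: if one tried to build $Q$ with the very same rate $\tilde\gamma$ used as the contraction weight, the termwise estimate would only give $\|\Psi_k(\z)^\top \Psi_k(\z)\| \leq M^2 \tilde\gamma^{2k}$ and the series $\sum \tilde\gamma^{-2k}\tilde\gamma^{2k}$ would diverge. It is exactly this slack that explains why hypothesis~$(1)$ is phrased ``for \emph{any} $\tilde\gamma \in (\gamma, 1)$'': one uses the UES property at the strictly smaller rate $\hat\gamma$ to guarantee summability of the Gramian series, while retaining the larger $\tilde\gamma$ in the contraction factor of the Lyapunov inequality. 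The remaining details (measurability/well-definedness of $Q$ on $Z$, and the strict matrix inequality in~\eqref{eq:PseudoRiccatti}) are then immediate from the construction.
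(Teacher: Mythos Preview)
Your proof is correct and follows essentially the same approach as the paper: the paper also defines $Q_+(\z)=\sum_{k\ge 0}\wt\gamma^{-2k}S(k,\z)^\top S(k,\z)$ (with $S(k,\z)$ playing the role of your $\Psi_k(\z)$), uses an intermediate rate $\wt\gamma_1\in(\gamma,\wt\gamma)$ exactly as your $\hat\gamma$, and verifies the sandwich and decrease by the same computations. The only minor addition in the paper is a second, ``memory-based'' construction via $P(\z)=\sum_{k\ge 0}\wt\gamma^{-2k}S(k,\sigma^{-k}(\z))S(k,\sigma^{-k}(\z))^\top$ and $Q_-(\z)=P(\z)^{-1}$, but this is presented as an alternative, not as a necessary ingredient.
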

The proof is reported in~Appendix~\ref{Appendix:ProofconverseLinear}. In the time-varying linear systems case, we recover the classical result (see for example~\cite[Section 1.5]{HalaIonescu94} or~\cite[Proposition 3.2]{IchKat01}) establishing the equivalence of uniform exponential stability and the existence of a time-varying quadratic Lyapunov function.
\begin{rmk}
As we show in the following section, the introduced framework allows to generalize the multiple Lyapunov functions approach to more general systems than switched systems. 
In fact, our abstract approach describes and structures the Lyapunov conditions in terms of the ``manifest behavior'' (in Willems' terminology, see, e.g.,~\cite{Willems1989,Will07,TotWil11}) of the dynamical system, not in terms of the specific switching signal. By doing so, we can naturally generalize multiple Lyapunov functions criteria to any system whose manifest behavior can be modeled or at least over-approximated by a sofic shift. As we lay here the theoretical foundations of the approach, we leave for further work this abstract view of it, however we believe that it could lead to even further generalizations.
\end{rmk}

\section{Graphical presentation of sofic shifts: a bridge to algorithmic Lyapunov Theory}\label{sec:GraphvsPart}
In this section, we develop formal tools relating graph presentations, coverings of sofic shifts and Lyapunov-based stability criteria. The main idea is to provide finite coverings of a sofic shift, in order to turn equations~\eqref{Eq:Sandwich1}-\eqref{eq:Decreasing1} (or~\eqref{eq:SandwichLinear}-\eqref{eq:PseudoRiccatti} in the linear case) into finitely verifiable criteria. Then, combinatorial considerations on the coverings will allow us to understand properties of the corresponding stability criteria.

\subsection{Graphs and Finite Coverings of Sofic Shifts}
\label{subsection:GraphAndPartion}
 We are interested in coverings of sofic shifts induced by particular graph presentations, as introduced in what follows.
\begin{defn}[Graph-Induced Coverings]\label{Defn:finiteParti}
Consider a sofic shift $Z\subset \Sigma^\Z$ and a set $\cC=\{C_1,\dots, C_K\}\subset \cP(Z)$. $\cC$ is said to be a \emph{graph-induced covering} ($\g$-covering) of $Z$ if there exists a graph $\cG=(S,E)$ on $\Sigma$, with $S=\{s_1,\dots, s_K\}$ such that 
\[
\begin{aligned}
Z&=\cZ(\cG),\\
C_j&=\cZ(\cG,s_j),\;\;\forall \;j\in \{1,\dots K\}.
\end{aligned}
\]
In this case, $\cG$ is said to be a \emph{presentation} of $\cC$.
If it holds that $C_i\cap C_j=\emptyset$, for all $i\neq j$, then $\cC$ is said to be \emph{not-redundant}. 
\end{defn}
The idea of $\g$-covering is inspired by the definition and theory of graph presentations of sofic shifts~\cite[Chapter 3]{LindMarcus95}. However, in our contribution, graphs are not only a representation tool for sofic shifts, since we are also interested in the properties of the arising coverings and, in subsequent sections, to the arising Lyapunov conditions.
In the following we state some important properties of $\g$-coverings, pertaining  to the language-theory interpretation of graphs.
\begin{prop}[Properties of $\g$-coverings]\label{prop:PropertiesOFFP}
Consider a sofic shift $Z\subset \Sigma^\Z$ and a $\g$-covering $\cC=\{C_1,\dots, C_K\}\subset \cP(Z)$ and suppose $\cG=(S,E)$ is a graph presentation of $\cC$, with $S=\{s_C\}_{C\in \cC}$.
Then we have 
\begin{align}\label{eq:Covering}
\bigcup_{C\in \cC}C&=Z.
\end{align}
Moreover, for all $i\in \Sigma$, consider the set-valued function $\cS_i:\cC\to \cP(\cC)$ defined by
\[
\cS_i(C):=\{D\in \cC\;\vert\;(s_C,s_D,i)\in E\}.
\]
 We have that
\begin{subequations}
\begin{equation}\label{eq:Propagation1}
{\rm{Post}}(C\cap\,[i]_{[0,0]})=i\cdot\bigcup_{D\in\cS_i(C)}{\rm{Post}}(D),
\end{equation}
\begin{equation}\label{eq:Propagation2}
{\rm{Pre}}(D\cap [i]_{[-1,-1]})=\big (\bigcup_{C:\;D\in\cS_i(C)}{\rm{Pre}}(C)\,\big)\cdot i.
\end{equation}
\end{subequations}
In particular, for any $C\in \cC$, any $i\in \Sigma$ and any $D\in \cS_i(C)$ we have
\begin{subequations}
\begin{equation}\label{eq:SuffPropagation1}
i\cdot {\rm{Post}}(D)\subseteq {\rm{Post}}(C),
\end{equation}
\begin{equation}\label{eq:SuffPropagation2}
{\rm{Pre}}(C)\cdot i\subseteq {\rm{Pre}}(D).
\end{equation}
\end{subequations}
\end{prop}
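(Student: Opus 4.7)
My plan is to exploit the tight bijection between sequences in $\cZ(\cG)$ and bi-infinite walks in $\cG$, so that all claimed set-level identities become direct statements about how walks traverse specified nodes.

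For the covering identity $\bigcup_{C\in\cC}C=Z$, I would proceed directly from the definitions. Every $\z \in Z = \cZ(\cG)$ admits at least one bi-infinite labeling walk in $\cG$, and any such walk has a well-defined initial node $s_j \in S$, placing $\z$ in $\cZ(\cG, s_j) = C_j$. The reverse inclusion is immediate since each $C_j \subseteq Z$ by construction.

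For the two propagation equations, I would prove the one for $\Post$ in detail and obtain the one for $\Pre$ by a symmetric argument on the transpose graph $\cG^\top$ (the time-inversion $\eta$ introduced in the preliminaries makes this dualization precise). For the $\subseteq$ direction, let $\z \in C \cap [i]_{[0,0]}$ and fix a labeling walk $\overline{\pi}$ whose initial node is $s_C$. Since $z_0 = i$, the relevant outgoing edge at $s_C$ must be of the form $(s_C, s_D, i)$, which means exactly $D \in \cS_i(C)$. Shifting the walk by one and relabeling by $\sigma$ shows that $\sigma(\z) \in D$, so the decomposition $\z^+ = i \cdot \sigma(\z)^+$ places $\z^+$ in $i \cdot \Post(D)$. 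For the $\supseteq$ direction, starting from $D \in \cS_i(C)$ and $\z' \in D$, I would concatenate the edge $(s_C, s_D, i) \in E$ onto the forward half of a labeling walk for $\z'$ through $s_D$, and glue in any backward walk ending at $s_C$, producing a $\z \in C \cap [i]_{[0,0]}$ with $\z^+ = i \cdot \z'^+$.

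The sufficient-condition inclusions are immediate corollaries: since $C \cap [i]_{[0,0]} \subseteq C$, we have $\Post(C \cap [i]_{[0,0]}) \subseteq \Post(C)$ by monotonicity of $\Post$, and isolating a single summand on the right-hand side of the propagation equation yields $i \cdot \Post(D) \subseteq \Post(C)$; the analogous argument for $\Pre$ gives $\Pre(C) \cdot i \subseteq \Pre(D)$. The main technical point will be the $\supseteq$ direction of the propagation equations, where one must ensure that a backward walk ending at $s_C$ actually exists. This follows from the standing assumption that $C \in \cC$ is non-empty, so by definition $s_C$ is already the initial node of some bi-infinite walk in $\cG$ and the required backward segment is contained in that walk; without this non-triviality hypothesis one could have ``isolated'' nodes in the presentation and the converse inclusion would fail.
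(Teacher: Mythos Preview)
Your proposal is correct and follows essentially the same route as the paper's own proof: both arguments unfold the definitions of $\cZ(\cG,s)$ and $\Post$/$\Pre$ via the correspondence between bi-infinite walks in $\cG$ and labeled sequences, and both derive \eqref{eq:Propagation2} from \eqref{eq:Propagation1} by time-reversal symmetry. If anything, your write-up is more careful than the paper's, which presents the two inclusions of \eqref{eq:Propagation1} as a single chain of equivalences without isolating the $\supseteq$ direction; your observation that this direction requires the existence of a backward walk ending at $s_C$ (hence non-emptiness of $C$) is a genuine subtlety that the paper's terse argument leaves implicit.
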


\begin{proof}
Consider a sofic shift $Z$ and a $\g$-covering $\cC$ induced by a graph $\cG=(S,E)$.
Condition~\eqref{eq:Covering} follows from Definition~\ref{Defn:finiteParti} and noticing that 
\[
Z=\cZ(\cG)=\bigcup_{s\in S}\cZ(\cG,s)=\bigcup_{C\in \cC}C.
\]
Then, consider any $C\in \cC$, any $i\in \Sigma$ and any $\z\in \Sigma^\Z$, we have
\[
\begin{aligned}
\z^+&\in \text{Post}([i]_{[0,0]}\cap C)\;\Leftrightarrow\\&\exists\; s_D\in S\;\text{such that } \left(\begin{aligned} (s_C,s_D,i)\in E\; \wedge \exists &\text{\;one-sided infinite path in $\cG$}\\&\text{  labeled by }\sigma(\z)^+\text{ starting at }D\end{aligned}\right)\;\Leftrightarrow\\&
\z^+\in i\cdot \bigcup_{D\in\cS_i(C)}{\rm{Post}}(D),
\end{aligned}
\]
proving~\eqref{eq:Propagation1}. Property~\eqref{eq:Propagation2} can be proven with similar steps.

Consider any $C\in \cC$, any $i\in \Sigma$ and any $D\in \cS_i(C)$. Conditions~\eqref{eq:SuffPropagation1} trivially follows by~\eqref{eq:Propagation1}. 
Then, since $D\in \cS_i(C)$ and thus, by~\eqref{eq:Propagation2}, $\Pre(C)\cdot i\subseteq \Pre(D)$.
\end{proof}

In what follows, we provide a first example of $\g$-covering, and we illustrate how Proposition~\ref{prop:PropertiesOFFP} can be used to state if a given covering of a sofic shift $Z$ is  not a $\g$-covering.
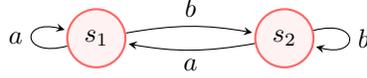
\begin{figure}[t!] 
  \centering
  \vspace{0.0cm}
    \begin{tikzpicture}    [>=stealth,
    shorten >=1pt,
    node distance=2.5cm,
    on grid,
    auto,
    every state/.style={draw=red!60, fill=red!5, thick}
    ]
\node[state,inner sep=1pt, minimum size=22pt] (left)                  {$s_1$};
\node[state,inner sep=1pt, minimum size=22pt] (right) [right=of left] {$s_2$};
\path[->]
   (left) edge[bend left=10]     node          [scale=1]            {$b$} (right)
        (right)   edge[bend left=10] node        [scale=1]        {$a$} (left)
   (left) edge[loop left=60]     node            [scale=1]          {$a$} (left)
   (right) edge[loop right=60]     node      [scale=1]                {$b$} (right)
   ;
\end{tikzpicture}
\caption{The graph $\cG_1$, corresponding to the covering $\cC_1=\{[a]_{[-1,-1]},[b]_{[-1,-1]}\}$ in Example~\ref{EX:FirstExample}.}\label{fig:FirstDeBrujin}
  \end{figure}

\begin{example}\label{EX:FirstExample}
We present a first simple example of $\g$-covering.
Consider the graph $\cG_1=(S_1,E_1)$ on the alphabet $\Sigma=\{a,b\}$ with $S=\{s_1,s_2\}$ and $E=\{(s_1,s_1,a),(s_1,s_2,b), (s_2,s_2,b), (s_2,s_1,a)\}$ represented in Figure~\ref{fig:FirstDeBrujin}.
The arising $\g$-covering $\cC_1=\{B_1,B_2\}$ of $\Sigma^\Z$ is given by $B_1=[a]_{[-1,-1]}$, and $B_2=[b]_{[-1,-1]}$. This covering satisfies the conditions of Proposition~\ref{prop:PropertiesOFFP}, by defining $\cS_a(B)=B_1$ for all $B\in \cB$ and $\cS_b(B)=B_2$ for all $B\in \cB$. First, since $\Post(B_1)=\Post(B_2)=\Sigma^+$,~\eqref{eq:Propagation1} is satisfied.
For condition~\eqref{eq:Propagation2}, observe that $\Pre(B_1)=\Pre([a]_{[-1,-1]})$ and $\Pre(B_2)=\Pre([b]_{[-1,-1]})$, implying  
\[
\begin{aligned}
(\Pre(B_1)\cup\Pre(B_2))\cdot a=\Pre(B_1)\;\; \text{ and }\;\;(\Pre(B_1)\cup\Pre(B_2))\cdot b=\Pre(B_2), 
\end{aligned}
\]
concluding the discussion.
\end{example}

\begin{example}
Conditions~\eqref{eq:Propagation1}~\eqref{eq:Propagation2} in Proposition~\ref{prop:PropertiesOFFP} can be seen as necessary conditions for being a $\g$-covering, as introduced in Definition~\ref{Defn:finiteParti}, and thus used to prove that a given covering of a sofic shift is \emph{not} a $\g$-covering.
Consider, as an example, the alphabet $\Sigma=\{a,b\}$, the full shift $\Sigma^\Z$ and the partition $\cC=\{C_1,C_2\}$ defined by $C_1:=[a]_{[-2,-2]}$ and $C_2:=[b]_{[-2,-2]}$. Intuitively, $\cC$ is composed by two classes of bi-infinite sequences which take value $a$ or $b$, respectively, at instant of time $-2$. 
We see that $\cC$ is not a graph-induced covering, since it cannot satisfy the conditions of Proposition~\ref{prop:PropertiesOFFP}.
\noindent
 We first note that $\Sigma^\Z=C_1\cup C_2$ and $C_1\cap C_2=\emptyset$. Then, suppose by contradiction that functions $\cS_i:\cC\to 2^\cC$ as in Proposition~\ref{prop:PropertiesOFFP} (i.e. satisfying conditions~\eqref{eq:Propagation1}~\eqref{eq:Propagation2}) exist. We note that condition~\eqref{eq:Propagation2} is not satisfied by $C_1$ nor $C_2$: consider $\omega_1,\omega_2\in \Pre(C_1)$ defined by $\omega_1\in \Pre([a,a]_{[-2,-1]})$ and $\omega_2\in \Pre([a,b]_{[-2,-1]})$. We have $\omega_1\cdot b\in \Pre([a,b]_{[-2,-1]})\subset \Pre(C_1)$ while $\omega_2\cdot b\in \Pre([b,b]_{[-2,-1]})\subset \Pre(C_2)$. 
Thus, for any $C\in \cC$ property~\eqref{eq:Propagation2} is not satisfied, implying that  $\cS_i(C_1)=\emptyset$, which is not possible since $\Post(C_1)=\Sigma^+$ and thus contradicting \eqref{eq:Propagation1}.
Intuitively, the set $\cC$, while providing a covering of the full shift $\Sigma^\Z$, is \emph{not} a $\g$-covering, since we cannot concatenate/propagate uniquely the signals of its classes. 
\end{example}

We now see how the conditions of Proposition~\ref{prop:PropertiesOFFP} characterize $\g$-coverings and we show that they can be inverted to provide a graph presentation of a given covering.
\begin{lemma}\label{defn:GraphAssociatedToCovering}
Consider a sofic shift $Z\subset \Sigma^\Z$, and a covering $\cC=\{C_1,\dots C_K\}$ of $Z$. Suppose that there exist  functions $\cS_i:\cC\to  2^{\cC}$ satisfying conditions~\eqref{eq:Propagation1}-\eqref{eq:Propagation2} of Proposition~\ref{prop:PropertiesOFFP}.
Then, defining the corresponding graph $\cG_{\cS}=(S_\cS,E_\cS)$ as follows:
\[
\begin{aligned}
S_\cS=\{s_C\}_{C\in \cC},\\
(s_C,s_D,i)\in E_\cS \;\;\text{if and only if }D\in \cS_i(C),
\end{aligned}
\]
we have that $\cC$ is a $\g$-covering and $\cG_\cS$ is a graph presentation of $\cC$.
\end{lemma}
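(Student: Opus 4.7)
Set $\cG_\cS$ as in the statement; the conclusion reduces to showing $C = \cZ(\cG_\cS, s_C)$ for every $C \in \cC$, since then $Z = \bigcup_{C \in \cC} C = \bigcup_C \cZ(\cG_\cS, s_C) = \cZ(\cG_\cS)$, using that $\cC$ covers $Z$. I treat the two inclusions separately.

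For $C \subseteq \cZ(\cG_\cS, s_C)$, fix $\z \in C$ and construct a bi-infinite walk $(D_h)_{h \in \Z}$ in $\cG_\cS$ with $D_0 = C$ and $D_{h+1} \in \cS_{z_h}(D_h)$ for every $h$. The construction is inductive and carries, on the forward side, the invariant $\sigma^h(\z)^+ \in \Post(D_h)$: the base case $h=0$ holds because $\z \in C$. Given the invariant at $h$, extract a representative $\wt \w \in D_h$ with $\wt \w^+ = \sigma^h(\z)^+$; since $\wt w_0 = z_h$, this $\wt \w$ lies in $D_h \cap [z_h]_{[0,0]}$, and the identity \eqref{eq:Propagation1} then delivers some $D_{h+1} \in \cS_{z_h}(D_h)$ with $\sigma^{h+1}(\z)^+ \in \Post(D_{h+1})$. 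The backward half is built symmetrically with the invariant $\sigma^h(\z)^- \in \Pre(D_h)$, using \eqref{eq:Propagation2} to produce $D_{h-1}$ with $D_h \in \cS_{z_{h-1}}(D_{h-1})$ from a witness $\wt\w \in D_h \cap [z_{h-1}]_{[-1,-1]}$ whose past matches $\sigma^h(\z)^-$.

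For the reverse inclusion $\cZ(\cG_\cS, s_C) \subseteq C$, given a walk $(D_h)$ in $\cG_\cS$ labeled by some $\z$ with $D_0 = C$, I invoke the $\supseteq$ directions of \eqref{eq:Propagation1}--\eqref{eq:Propagation2}: iterating them back along $D_N, \dots, D_0$ and forward along $D_{-M}, \dots, D_0$ produces, for each window $[-M, N-1]$, an approximant $\wt \z^{(M,N)} \in C$ agreeing with $\z$ on that window. Compactness of $\Sigma^\Z$ in the product topology together with closedness of $C$ (itself a consequence, once the equality $C=\cZ(\cG_\cS,s_C)$ is established, of the right-hand side being a closed subshift) then yields $\z \in C$. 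I expect the hard part to be this reverse inclusion: obtaining a single approximant of $\z$ in $C$ matching it on both the past and the future windows simultaneously demands interleaving the two iterative constructions rather than running them in isolation, and the passage to the limit has to be reconciled with closedness of the individual classes; the forward direction, by contrast, is essentially a direct unrolling of \eqref{eq:Propagation1}--\eqref{eq:Propagation2}.
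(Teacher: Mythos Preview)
Your forward inclusion $C \subseteq \cZ(\cG_\cS, s_C)$ is sound: given $\z \in C$, unrolling \eqref{eq:Propagation1} forward and \eqref{eq:Propagation2} backward with the invariants you describe does produce a bi-infinite walk through $s_C$ labelled by $\z$. This part is close in spirit to the paper's argument.

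The reverse inclusion, however, rests on a step you yourself flag as circular: to pass from the approximants $\wt\z^{(M,N)} \in C$ to $\z \in C$ you invoke closedness of $C$, and the only justification offered is the equality $C = \cZ(\cG_\cS, s_C)$ that you are trying to prove. Nothing in the hypotheses forces the $C_j$ to be closed --- they are arbitrary subsets of $Z$ forming a covering --- so this is a genuine gap, not a presentational one. Compactness of $\Sigma^\Z$ is also unavailable in general, since the paper allows $\Sigma$ merely countable. Even setting topology aside, the ``interleaving'' difficulty you anticipate is real: conditions \eqref{eq:Propagation1} and \eqref{eq:Propagation2} constrain $\Post$ and $\Pre$ separately, and it is not clear they ever yield a \emph{single} element of $C$ matching $\z$ on a two-sided window $[-M,N-1]$.

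The paper bypasses topology entirely and argues at the finite-word level: it shows by induction on the word length that $[\wi]_{[0,K-1]} \cap C \neq \emptyset$ if and only if there is a forward path of length $K$ from $s_C$ in $\cG_\cS$ labelled by $\wi$ (the inductive step uses \eqref{eq:Propagation1}), and symmetrically for backward words via \eqref{eq:Propagation2}. The passage from this finite-word characterisation to the bi-infinite claim is then declared immediate. So the paper's route differs from yours chiefly in that it never appeals to limits or closedness, treating the problem as a language-theoretic one rather than a topological one --- which is the direction you would need to move in to close the gap.
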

\begin{proof}
Consider $C\in \cC$, we want to prove that, given any $\z\in Z$, we have $\z\in C$ if and only if there exists a bi-infinite walk $\pi$ in $\cG_\cS$ starting at $s_C$ and labeled by $\z$.

 We first prove that, for every $K\in \N$ and any $\wi \in \Sigma^K$, we have $[\wi]_{[0,K-1]}\cap C\neq \emptyset$ if and only if there exists a finite forward path in $\cG_\cS$ starting at $s_C$ and labeled by $\wi$. Let us consider the case $K=1$ first, consider $i\in \Sigma$, recalling~\eqref{eq:Propagation1} we have
\[
[i]_{[0,0]}\cap C\neq \emptyset\;\;\Leftrightarrow\;\;\exists \,D\in \cC\text{ such that } D\in \cS_i(C).
\]
The inductive step is obtained by similar argument.
Using~\eqref{eq:Propagation2} one can similarly prove that for every $K\in \N$ and any $\wi \in \Sigma^K$, it holds that $[\wi]_{[-K,-1]}\cap C\neq \emptyset$ if and only if there exists a finite backward path in $\cG_\cC$ starting at $s_C$ and labeled by $\wi$, concluding the proof.
\end{proof}
On the other hand, the following example shows that a graph presentation of a $\g$-covering is not unique, in  general.
\begin{example}\label{example:MultipleGraphRepr}
Consider $\Sigma=\{a,b\}$, $Z=\Sigma^\Z$, and $\cC=\{C_1,C_2\}$ with $C_1=C_2=\Sigma^\Z$. It can be seen that the three graphs in Figure~\ref{figure:Figure2} are valid (and non-isomorphic\footnote{The graphs  $\cG_1=(S,E)$ and $\cG_2=(Q,F)$ are said to be \emph{isomorphic} if there exists a bijection $\varphi:S\to Q$ such that $(s,q,i)\in E$ if and only if $(\varphi(s),\varphi(q),i)\in F$.}) graph presentations of $\cC$.
\end{example}

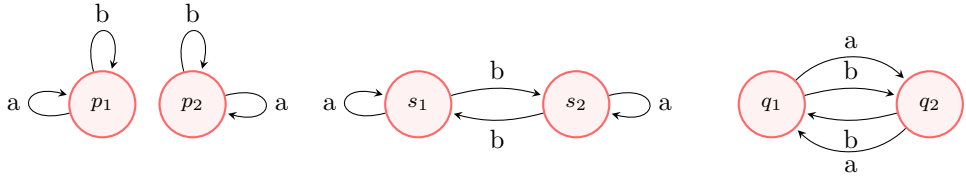
\begin{figure}[b!]
    \color{black}
    \centering
    \begin{tikzpicture}%
    [>=stealth,
    shorten >=1pt,
    node distance=0.5cm,
    on grid,
    auto,
    every state/.style={draw=red!60, fill=red!5, thick}
    ]
    \node[state] (left)   []               {\footnotesize $s_1$};
    \node[state] (right) [right=of left, xshift=1.6cm] {\footnotesize $s_2$};
      \node[state] (leftleft) [left=of left, xshift=-2.5cm] {\footnotesize $p_2$};
         \node[state] (leftleftleft) [left=of leftleft, xshift=-0.7cm] {\footnotesize $p_1$};
\node[state] (right1) [right=of right, xshift=2.1cm] {\footnotesize $q_1$};
         \node[state] (right2) [right=of right1, xshift=1.6cm] {\footnotesize $q_2$};

    \path[->]
    (left) edge[loop left=60]     node                      {a} (left)
    (right) edge[bend right=-15]     node                      {b} (left)
     (left) edge[bend right=-15]     node                      {b} (right)
    (right) edge[loop right=300]     node                      {a} (right)
    (leftleftleft) edge[loop left=60]     node                      {a} (leftleftleft)
     (leftleftleft) edge[loop above=90]     node                      {b} (leftleftleft)
      (leftleft) edge[loop right=60]     node                      {a} (leftleft)
     (leftleft) edge[loop above=90]     node                      {b} (leftleft)
 (right1) edge[bend right=-15]     node                      {b} (right2)
    (right1) edge[bend right=-45]     node                      {a} (right2)
     (right2) edge[bend right=-15]     node                      {b} (right1)
    (right2) edge[bend right=-45]     node                      {a} (right1)

;
    \end{tikzpicture}
\caption{Three possible graph presentations of the covering of $\{a,b\}^\Z$ in Example~\ref{example:MultipleGraphRepr}. }\label{figure:Figure2}
\end{figure}
Uniqueness indeed holds when considering non-redundant coverings, as proven in what follows.
\begin{lemma}\label{lemma:Unicity}
Consider a sofic shift $Z\subset \Sigma^\Z$, and a non-redundant $\g$-covering $\cC=\{C_1,\dots C_K\}$ of $Z$. Then there exists a unique (up to isomorphism) graph presentation of $\cC$, that we denote by $\cG_\cC$.
\end{lemma}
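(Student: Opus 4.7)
The plan is to show that, under non-redundancy, the transition function $\cS_i:\cC\to 2^\cC$ from Proposition~\ref{prop:PropertiesOFFP} is determined by $\cC$ alone, independently of the chosen graph presentation. Once this is established, Lemma~\ref{defn:GraphAssociatedToCovering}, which reconstructs a graph from the data $(\cC,\{\cS_i\})$, immediately yields uniqueness up to isomorphism.

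First I will observe that, for any presentation $\cG=(S,E)$ of $\cC$, the map $\psi_\cG:S\to\cC$, $s\mapsto\cZ(\cG,s)$, is a bijection. Surjectivity is built into the definition of a $\g$-covering; injectivity follows from non-redundancy, since two distinct non-empty elements of $\cC$ are disjoint and hence cannot both arise as $\cZ(\cG,s)$ for different $s$ (empty members of $\cC$ correspond to dead nodes and can be discarded without loss of generality).

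The central step will then be to establish the intrinsic characterisation
\[
D\in\cS_i(C)\ \iff\ \sigma\bigl(C\cap[i]_{[0,0]}\bigr)\cap D\ne\emptyset,
\]
whose right-hand side depends only on $\cC$, $\sigma$, and $i$. The ``$\Leftarrow$'' direction is the easier one: given $\z\in C\cap[i]_{[0,0]}$ with $\sigma(\z)\in D$, lift $\z$ to a bi-infinite walk in $\cG$ with initial node $s_C$; the outgoing edge labelled $i$ must terminate at some $s_{D'}$ with $\sigma(\z)\in\cZ(\cG,s_{D'})=D'$, and non-redundancy forces $D'=D$ because $\sigma(\z)$ belongs to exactly one member of $\cC$. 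For ``$\Rightarrow$'', starting from an edge $(s_C,s_D,i)\in E$, I will glue a bi-infinite walk ending at $s_C$ (existing because $C\ne\emptyset$) with the edge $(s_C,s_D,i)$ and a bi-infinite walk starting at $s_D$ (existing because $D\ne\emptyset$), producing a witness $\z\in C$ with $z_0=i$ and $\sigma(\z)\in D$.

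With this in hand, uniqueness will follow at once: for two presentations $\cG_1=(S_1,E_1)$ and $\cG_2=(S_2,E_2)$, the bijection $\varphi:=\psi_{\cG_2}^{-1}\circ\psi_{\cG_1}:S_1\to S_2$ satisfies $(s,q,i)\in E_1\Leftrightarrow\psi_{\cG_1}(q)\in\cS_i(\psi_{\cG_1}(s))\Leftrightarrow\psi_{\cG_2}(\varphi(q))\in\cS_i(\psi_{\cG_2}(\varphi(s)))\Leftrightarrow(\varphi(s),\varphi(q),i)\in E_2$, since $\cS_i$ is the same set-valued function under both presentations; hence $\varphi$ is a labeled-graph isomorphism. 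The main technical obstacle is the ``$\Rightarrow$'' half of the intrinsic characterisation: producing a bi-infinite walk that realises a prescribed edge is the step where the (mild and standard) non-emptiness of each $C\in\cC$ is used essentially.
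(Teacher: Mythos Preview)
Your proof is correct and follows essentially the same approach as the paper: both arguments hinge on the fact that, under non-redundancy, the edge set is determined intrinsically by $\cC$ via the existence of a witnessing sequence $\z\in C$ with $z_0=i$ and $\sigma(\z)\in D$. The paper packages this as a short contradiction argument (assuming an edge $(s_C,s_D,i)$ present in one presentation but absent in the other, then producing such a $\z$ and deriving a violation of non-redundancy via~\eqref{eq:Propagation2}), whereas you give the direct version by first isolating the intrinsic characterisation of $\cS_i$ and then constructing the isomorphism explicitly.
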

\begin{proof}
Suppose by contradiction that $\cH_1=(S,E)$ and $\cH_2=(Q,F)$ are two non-isomorphic graph presentations of~$\cC$. For every $C\in \cC$, denote by $s_C\in S$ and $q_C\in Q$ the corresponding nodes in $\cH_1$ and $\cH_2$, respectively. Since by assumption $\cH_1$ and $\cH_2$ are not isomorphic, consider $C,D\in \cC$ and $i\in \Sigma$ such that
\[
(s_C,s_D,i)\in E \;\;\wedge \;\;(q_C,q_D,i)\notin F.
\]
Since $(s_C,s_D,i)\in E$, we can consider $\z\in C$ such that $z_0=i$ and $\sigma(\z)\in D$. Since $\sigma(\z)\in D=\cZ(\cH_2,q_D)$, by~\eqref{eq:Propagation2} there must exist $B\neq C$ such that $(q_B,q_D,i)\in F$ and $\z\in \cZ(\cH_2,q_B)=B$, contradicting the non-redundancy.
\end{proof}

\subsection{Finite-Covering Lyapunov functions}\label{subsection:Finite-Covering}
In this subsection we show how $\g$-coverings (and thus, graphs)  can provide a tool to refine Definition~\ref{defn:SequenceDepLyapFunct}, having more tractable Lyapunov criteria for stability of~\eqref{eq:System}. 
In the switched case, the conditions turn out to be a \emph{finite} set of inequalities, providing algorithmically appealing sufficient conditions for stability. 
\begin{defn}\label{defn:FiniteGraphLyapunov}
Given a sofic shift $Z$, consider a $\g$-covering of $Z$, given by $\cC=\{C_1,\dots, C_K\}$. Consider $\cG=(S,E)$, a graph presentation of $\cC$ and a function $f:\R^n\times Z\to \R^n$. A $\cG$-based Lyapunov function is a $W:\R^n\times \cC\to \R$ such that there exist $\alpha_1,\alpha_2\in \cK_\infty$ and $\gamma\in [0,1)$ such that
\begin{subequations}
\begin{equation}\label{Eq:Sandwich2}
\alpha_1(|x|)\leq W(x,C)\leq \alpha_2(|x|),\;\;\forall x\in \R^n,\;\forall C\in \cC,
\end{equation}
\begin{equation}\label{eq:Decreasing2}
W(f(x,\z),D)\leq \gamma W(x,C),\;\;\;\begin{aligned}&\forall x\in \R^n,\;\forall C,D \in\cC,\forall i\in \Sigma\;\text{ s.t. } (s_C,s_D,i)\in E,\\ &\forall\, \z\text{ s.t. } \z\in C\;\wedge\;z_0=i\;\wedge\; \sigma(\z)\in D.
\end{aligned}
\end{equation}
\end{subequations}
\end{defn}
For any presentation $\cG$ of $Z$, we now prove that any $\cG$-based Lyapunov function implicitly defines a sd-LF (as introduced in Definition~\ref{defn:SequenceDepLyapFunct}), whose existence in turns completely characterizes the GUAS property (recall Theorem~\ref{lemma:ConverseNonLinear}).

\begin{thm}\label{prop:FromFiniteToInfinite}
Consider any $\g$-covering $\cC=\{C_1,\dots,C_K\}$ of a sofic shift $Z$, and $\cG$ a presentation of~$\cC$. Suppose $W:\R^n\times \cC\to \R$ is a $\cG$-based Lyapunov function for system~\eqref{eq:System}. Then the functions $ \cV_{\min}$ and $\cV_{\max}:\R^n\times Z\to \R$ defined by
\begin{subequations}
\begin{equation}\label{eq:LyapRedMin}
\cV_{\min}(x,\z):=\min_{\substack{X\in \cC\\ \z\in X}}W(x,X),
\end{equation}
\begin{equation}\label{eq:LyapRedMax}
\cV_{\max}(x,\z):=\max_{\substack{X\in \cC\\ \z\in X}}W(x,X),
\end{equation}
\end{subequations}
are sd-LF, in the sense of Definition~\ref{defn:SequenceDepLyapFunct}.
\end{thm}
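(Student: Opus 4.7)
The plan is to verify that both $\cV_{\min}$ and $\cV_{\max}$ satisfy the two conditions of Definition~\ref{defn:SequenceDepLyapFunct}, namely the sandwich bounds~\eqref{Eq:Sandwich1} and the one-step decrease~\eqref{eq:Decreasing1}. The sandwich bounds are immediate: by~\eqref{eq:Covering} the set $\{X\in\cC:\z\in X\}$ is nonempty for every $\z\in Z$, so applying~\eqref{Eq:Sandwich2} uniformly in $X$ gives $\alpha_1(|x|)\le\cV_{\min}(x,\z)\le\cV_{\max}(x,\z)\le\alpha_2(|x|)$, with the same $\alpha_1,\alpha_2$ as for $W$.

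The substance of the proof is the decrease property, which in both cases reduces to the following pattern: given a well-chosen pair $(X,Y)\in\cC\times\cC$, exhibit an edge $(s_X,s_Y,z_0)\in E$ so that~\eqref{eq:Decreasing2} can be invoked with the same rate $\gamma$. For $\cV_{\min}$, I would pick an optimizer $X^*\in\cC$ with $\z\in X^*$ and $W(x,X^*)=\cV_{\min}(x,\z)$. Since $\z\in X^*=\cZ(\cG,s_{X^*})$, the graph presentation supplies a bi-infinite walk at $s_{X^*}$ labeled by $\z$; reading off the edge labeled by $z_0$ out of $s_{X^*}$ identifies a class $D\in\cC$ for which $(s_{X^*},s_D,z_0)\in E$ and, by shifting the walk, $\sigma(\z)\in D$. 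Then~\eqref{eq:Decreasing2} yields $\cV_{\min}(f(x,\z),\sigma(\z))\le W(f(x,\z),D)\le\gamma W(x,X^*)=\gamma\cV_{\min}(x,\z)$.

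The $\cV_{\max}$ case runs in the opposite direction and is the main obstacle, since the optimizer now sits on the ``future'' side: I would fix $Y^*\in\cC$ with $\sigma(\z)\in Y^*$ and $W(f(x,\z),Y^*)=\cV_{\max}(f(x,\z),\sigma(\z))$, and then produce some $X\in\cC$ containing $\z$ with $(s_X,s_{Y^*},z_0)\in E$. Here I would invoke Proposition~\ref{prop:PropertiesOFFP}, specifically~\eqref{eq:Propagation2}: as $\sigma(\z)\in Y^*\cap[z_0]_{[-1,-1]}$, its past $\sigma(\z)^-=\z^-\cdot z_0$ lies in $\Pre(Y^*\cap[z_0]_{[-1,-1]})$, so there is $X\in\cC$ with $Y^*\in\cS_{z_0}(X)$, i.e.\ $(s_X,s_{Y^*},z_0)\in E$, and $\z^-\in\Pre(X)$. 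The delicate point is that $\z^-\in\Pre(X)$ is strictly weaker than $\z\in X$; I would close this gap by explicit walk concatenation, gluing a one-sided backward walk in $\cG$ ending at $s_X$ labeled by $\z^-$ (from $\z^-\in\Pre(X)$), the bridging edge $(s_X,s_{Y^*},z_0)$, and the bi-infinite walk at $s_{Y^*}$ labeled by $\sigma(\z)$ (from $\sigma(\z)\in Y^*$) into a single bi-infinite walk at $s_X$ labeled by $\z$, showing $\z\in\cZ(\cG,s_X)=X$. Equipped with this upgrade,~\eqref{eq:Decreasing2} gives $W(f(x,\z),Y^*)\le\gamma W(x,X)\le\gamma\cV_{\max}(x,\z)$, and the left-hand side equals $\cV_{\max}(f(x,\z),\sigma(\z))$ by the choice of $Y^*$, completing the argument.
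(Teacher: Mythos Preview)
Your proposal is correct and follows the same overall strategy as the paper: establish the sandwich bounds directly from~\eqref{Eq:Sandwich2}, and for the decrease use the graph presentation to produce, for a suitably chosen class on one side, a neighboring class on the other side connected by an edge labeled $z_0$, so that~\eqref{eq:Decreasing2} applies. For $\cV_{\min}$ your argument coincides with the paper's. For $\cV_{\max}$ the paper simply asserts the symmetric backward fact (for every $D\ni\sigma(\z)$ there exists $C$ with $(s_C,s_D,z_0)\in E$ and $\z\in C$) by ``a similar backward reasoning'', whereas you reach the same conclusion by a slightly longer route---first extracting $\z^-\in\Pre(X)$ from~\eqref{eq:Propagation2} and then explicitly concatenating walks to upgrade to $\z\in X$. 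This detour is unnecessary: since $\sigma(\z)\in Y^*=\cZ(\cG,s_{Y^*})$, the bi-infinite walk at $s_{Y^*}$ labeled by $\sigma(\z)$ already contains an incoming edge $(s_X,s_{Y^*},z_0)$, and shifting that walk back by one step directly exhibits $\z\in\cZ(\cG,s_X)=X$. Your version has the virtue of making explicit a point the paper leaves implicit, but the direct walk-shifting argument is shorter.
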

\begin{proof}
By inequality~\eqref{Eq:Sandwich2} it follows that the  functions $\cV_{\min}$ and $\cV_{\max}$ satisfy inequality~\eqref{Eq:Sandwich1}.
 Now, consider any $\z\in Z$ and suppose $z_0=i\in \Sigma$. Consider any $C\in \cC$ such that $\z\in C$, thus implying that $C\cap [i]_{[0,0]}$ is not empty. By~\eqref{eq:SuffPropagation1}-\eqref{eq:SuffPropagation2} there exists $D\in \cC$ such that $D\in \cS_i(C)$ and $\sigma(\z)\in D$. Since $\z\in Z$, $i\in \Sigma$, and $C\in \cC$ were arbitrary, we have proven that
\begin{equation}\label{eq:technicalforward}
\forall C\in \cC,\;\forall i\in \Sigma,\; \forall \z\in C\cap [i]_{[0,0]}, \;\;\exists D\in \cS_i(C)\text{ such that } \sigma(\z)\in D.
\end{equation}
With a similar backward reasoning, one can see that 
\begin{equation}\label{eq:technicalbackward}
\forall D\in \cC,\;\forall i\in \Sigma,\; \forall \z\in D\cap [i]_{[-1,-1]}, \;\;\exists C\in \cS^{-1}_i(D)\text{ such that } \sigma^{-1}(\z)\in C.
\end{equation}
Given any $i\in \Sigma$, consider any $\z\in Z$, with $z_0=i$ and any $x\in \R^n$ and consider $C\in \cC$ such that $\min_{\substack{X\in \cC\\ \z\in X}}W(x,X)=W(x,C)$ and a $D\in \cC$ such that~\eqref{eq:technicalforward} holds. Using~\eqref{eq:Decreasing2}, we have
\[
\cV_{\min}(f(x,\z),\sigma(\z))=\hspace{-0.15cm}\min_{\substack{X\in \cC\\ \sigma(\z)\in X}}W(f(x,\z),X)\leq W(f(x,\z), D)\leq \gamma W(x,C)=\hspace{-0.05cm}\gamma \cV_{\min}(x,\z)
\]
proving~\eqref{eq:Decreasing1} for $\cV_{\min}$.
For the function $\cV_{\max}$ the reasoning is similar: consider any $\z\in Z$ and suppose $\cV_{\max}(f(x,\z),\sigma(\z))=W(f(x,\z),D)$ for some $D\in \cC$ such that $\sigma(\z)\in D$. Consider a $C\in \cC$ such that~\eqref{eq:technicalbackward} holds, computing we have
\[
\cV_{\max}(f(x,\z),\sigma(\z))=W(f(x,\z),D)\leq \gamma W(x, C)\leq \gamma \max_{\substack{X\in \cC\\ \z\in X}}W(x,X)=\cV_{\max}(x,\z),
\]
concluding the proof.
\end{proof}
We note that if the considered $\g$-covering is non-redundant, then the definitions in~\eqref{eq:LyapRedMin} and~\eqref{eq:LyapRedMax} coincide with the simple identification $\cV(x,\z)=W(x,X)$, for the unique $X\in \cC$ such that $\z\in X$.
\begin{rmk}[Switched systems case]
In the switched systems case, i.e. considering functions $f:\R^n \times \Sigma\to \R^n$, condition~\eqref{eq:Decreasing2} reads
\[
W(f(x,i),D)\leq \gamma W(x,C),\;\;\forall x\in \R^n,\forall C,D \in\cC,\forall i\in \Sigma\;\text{ s.t. } (s_C,s_D,i)\in E,
\]
and thus Definition~\ref{defn:FiniteGraphLyapunov} imposes, jointly with the positive definiteness conditions in~\eqref{Eq:Sandwich2}, a finite number of  inequalities among the functions $W(\cdot,C)$, $C\in \cC$, one for each edge $e\in E$. Thus, in this setting, Definition~\ref{defn:FiniteGraphLyapunov} provides a multiple-Lyapunov sufficient condition for stability, with an underlying graph whose structure defines the required inequalities. In the switched systems literature (see~\cite{AJPR:14,PEDJ:16,PhiAth19} and references therein), functions satisfying the conditions in Definition~\ref{defn:FiniteGraphLyapunov} are referred to as \emph{path-complete Lyapunov functions}. 
In the next statement, we prove that in this case, as far as the general class of continuous functions is considered, finite-covering Lyapunov functions provide a ``finite'' characterization of stability, to be compared with the ``infinite'' conditions given in Theorem~\ref{lemma:ConverseNonLinear}.
\end{rmk}

\begin{thm}[Finite-covering Converse Lyapunov Theorem for Switched Systems]\label{lemma:FiniteCoveringLemma}
Consider any sofic shift $Z\subseteq \Sigma^\Z$, any $f:\R^n\times \Sigma\to \R^n$, any $\g$-covering defined by $\cC=\{C_1,\dots,C_K\}$ of $Z$, and consider $\cG$ a presentation of $\cC$.
System~\eqref{eq:System} is GUAS (in the sense of Definition~\ref{defn:GUAS}) if and only if there exists a $\cG$-based Lyapunov function for system~\eqref{eq:System}.
\end{thm}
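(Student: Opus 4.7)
The sufficient direction is immediate from the earlier results: given any $\cG$-based Lyapunov function $W$, Theorem~\ref{prop:FromFiniteToInfinite} supplies a sd-LF $\cV_{\min}$ on $\R^n\times Z$, and Theorem~\ref{lemma:ConverseNonLinear} then yields GUAS. The work therefore lies in the converse direction, where the plan is to construct a $\cG$-based $W$ directly from the $\cKL$-bound $|\Phi(k,x_0,\z)|\leq \beta(|x_0|,k)$ guaranteed by GUAS, adapting the classical Yoshizawa/Sontag converse-Lyapunov construction to the covering structure.

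The first step would be to apply Sontag's lemma to obtain $\sigma_1,\sigma_2\in\cK_\infty$ such that $|\Phi(k,x_0,\z)|\leq \sigma_1(\sigma_2(|x_0|)e^{-k})$ for all $k\in\N$, $x_0\in\R^n$, $\z\in Z$. In the switched-system case, $\Phi(k,x_0,\z)$ depends on $\z$ only through the forward sequence $\z^+\in\Sigma^+$, so I would write $\Phi^+(k,x_0,\w^+)$ for the corresponding forward trajectory, well-defined for any $\w^+\in\Post(Z)$. Then, for each $C\in\cC$, I would set
\[
W(x,C):=\sup_{\w^+\in\Post(C)}\;\sup_{k\in\N}\;\sigma_1^{-1}\!\big(|\Phi^+(k,x,\w^+)|\big)\,\mu^k,
\]
with a free parameter $\mu\in(1,e)$. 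The sandwich bounds would follow with $\alpha_1:=\sigma_1^{-1}$ (take $k=0$ and any $\w^+\in \Post(C)$, which is non-empty since $C=\cZ(\cG,s_C)$ is) and $\alpha_2:=\sigma_2$, as $(\mu/e)^k\leq 1$ forces the inner supremum to be bounded by $\sigma_2(|x|)$.

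The decrease condition is where the graph combinatorics enter. For any edge $(s_C,s_D,i)\in E$, property~\eqref{eq:SuffPropagation1} gives $i\cdot\Post(D)\subseteq\Post(C)$, so for every $\w'^+\in\Post(D)$ the concatenation $\w^+:=i\cdot\w'^+$ lies in $\Post(C)$ and satisfies $\Phi^+(k,\wt f(x,i),\w'^+)=\Phi^+(k+1,x,\w^+)$. A one-step reindexing then yields
\[
\sigma_1^{-1}(|\Phi^+(k,\wt f(x,i),\w'^+)|)\,\mu^k \;=\; \mu^{-1}\,\sigma_1^{-1}(|\Phi^+(k+1,x,\w^+)|)\,\mu^{k+1} \;\leq\; \mu^{-1}\,W(x,C),
\]
and taking suprema over $\w'^+\in\Post(D)$ and $k\in\N$ delivers $W(\wt f(x,i),D)\leq\gamma\,W(x,C)$ with $\gamma:=\mu^{-1}\in(1/e,1)$, as required by~\eqref{eq:Decreasing2}.

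The main obstacle I anticipate is extracting a single uniform contraction rate $\gamma<1$ from a generic $\cKL$ bound: Sontag's lemma is precisely what makes this tractable, by converting the decay of $\beta$ into an exponential profile against which the growing weight $\mu^k$ can be balanced. The argument also exploits the switched structure in a non-trivial way, namely that $\Phi^+$ depends only on the forward part of the sequence, so that prepending the letter $i$ via~\eqref{eq:SuffPropagation1} automatically produces a valid forward sequence in $\Post(C)$ irrespective of its past, which is what makes the sup over $\Post(C)$ dominate the sup over $\Post(D)$ after one dynamical step.
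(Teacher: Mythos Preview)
Your proposal is correct and follows essentially the same route as the paper: both invoke Sontag's $\cKL$-factorization (the paper writes it as $\rho_1(\gamma^k\rho_2(\cdot))$, you as $\sigma_1(\sigma_2(\cdot)e^{-k})$), define $W(x,C)$ as the sup over forward sequences in $\Post(C)$ of the exponentially weighted $\sigma_1^{-1}$-norm of the trajectory, and obtain the edge-wise decrease via the inclusion $i\cdot\Post(D)\subseteq\Post(C)$ from~\eqref{eq:SuffPropagation1} together with a one-step reindexing. The only cosmetic differences are your explicit free parameter $\mu\in(1,e)$ and your spelling out that in the switched case $\Phi$ depends only on $\z^+$, which the paper uses tacitly.
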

The proof substantially follows the ideas of  Theorem~\ref{lemma:ConverseNonLinear}, and for completeness is reported in Appendix~\ref{appendix:ProofFinite}.
Theorem~\ref{lemma:FiniteCoveringLemma} provides a ``finite'' counterpart of Theorem~\ref{lemma:ConverseNonLinear}: for switched GUAS systems, a finite covering Lyapunov function exists, no matter the chosen $\g$-covering of the sofic shift. On the other hand, when restricting the search to particular subclasses of continuous functions (e.g. $\cL_1$-weighted norms, quadratic functions, SOS polynomials), the size and topology of the considered $\g$-covering (equivalently, graph presentation) will play a crucial role in defining the level of conservatism of the arising Lyapunov conditions. For this reason, in the subsequent sections we further analyze the relations between graph-topology and stability conditions, introducing remarkable and numerically appealing hierarchies of graphs.

\subsection{Properties of Graphs vs Properties of Coverings, Duality}
In Subsection~\ref{subsection:GraphAndPartion} we studied the formal correspondence between graphs and the related coverings of sofic shifts they induce; then in Subsection~\ref{subsection:Finite-Covering} we provided the definition and main results concerning Lyapunov functions. As a by-product, we also provided a formal equivalence of different frameworks in the context of stability analysis of switched systems: results concerning graph-based Lyapunov functions~\cite{AJPR:14,PEDJ:16,PhiAth19,CHITOUR2021101021,DelPas22,AazanGir22} can be interpreted as conditions implicitly based on coverings of the underlying sofic shift. This formal correspondence was only sketched in the preliminary~\cite{DelRosJun23a,DelRosJun23b}.

In this subsection we continue the analysis of this correspondence, providing new results and insights. We present how graph and covering properties are related one to another, we underline the applications to the stability analysis of~\eqref{eq:System} and we show how this connection can be leveraged for algorithmic purposes in Lyapunov analysis.

\begin{defn}[Time-Inversion and Time-Inverse Covering]\label{defn:TimeInvCovering}
Given any set $S\subset \Sigma^\Z$ by $S^{-1}$ we denote its time-inversion, defined by
\[
S^{-1}:=\{\z^{-1}\;\vert\;\z\in S\}.
\]
Given a sofic shift $Z$ and a covering $\cC=\{C_1,\dots,C_K\}\subset\cP(Z)$, we define the \emph{time-inverse} of $\cC$, by $\cC^{-1}=\{C_1^{-1},\dots,C_K^{-1}\}$, which is a covering of $Z^{-1}$. 
\end{defn}
We state in what follows a lemma characterizing inverse coverings and their presentations.
\begin{lemma}[Time-Inversion and Transpose Graphs]\label{lemma:TimeInversion}
Given any sofic shift $Z$, consider a $\g$-covering $\cC$, its time inversion $Z^{-1}$ and the time-inverse covering $\cC^{-1}$. A graph $\cG$ is a graph presentation of $\cC$ if and only if $\cG^\top$ is a graph presentation of $\cC^{-1}$. Moreover, $\cC$ is a (non-redundant) $\g$-covering if and only if $\cC^{-1}$ is so.
\end{lemma}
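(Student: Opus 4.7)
The plan is to establish a natural bijection between bi-infinite walks in $\cG$ and bi-infinite walks in $\cG^\top$ via time-reversal, verify that this bijection maps $\z$-labeled walks to $\z^{-1}$-labeled walks, and then read off all the claimed equivalences.

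First I would set up the walk-reversal correspondence. Given a bi-infinite walk $\overline\pi = (\ldots, e_{-1}, e_0, e_1, \ldots)$ in $\cG$, with $e_k = (s_k, s_{k+1}, z_k)$ and $\ell(e_k)=z_k$, I would define $\overline\pi^{-1}$ by $e'_k := e_{\eta(k)}^\top = e_{-1-k}^\top = (s_{-k}, s_{-1-k}, z_{-1-k})$ in $\cG^\top$. A direct verification shows (i) consecutiveness is preserved (the arrival node of $e'_k$ coincides with the start of $e'_{k+1}$, both equal to $s_{-1-k}$), and (ii) the labels of $\overline\pi^{-1}$ are precisely $h_k = z_{\eta(k)}$, i.e.\ $\overline\pi^{-1}$ is labeled by $\z^{-1}$. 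The construction is clearly involutive, so it gives a bijection between bi-infinite walks in $\cG$ and in $\cG^\top$. Moreover, tracing the index shift through the definition of \emph{initial node}, one checks that the initial node of $\overline\pi$ equals the initial node of $\overline\pi^{-1}$ (both coincide with the node lying between position $-1$ and position $0$ of the walk).

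From this bijection, $\cZ(\cG^\top) = \cZ(\cG)^{-1}$ and, for every node $s$, $\cZ(\cG^\top, s) = \cZ(\cG, s)^{-1}$. Thus, if $\cG$ is a presentation of $\cC = \{C_1, \ldots, C_K\}$ in the sense of Definition~\ref{Defn:finiteParti}, taking the same node set in $\cG^\top$ immediately yields $\cZ(\cG^\top) = Z^{-1}$ and $\cZ(\cG^\top, s_j) = C_j^{-1}$ for every $j$, so $\cG^\top$ presents $\cC^{-1}$. The converse direction follows from $(\cG^\top)^\top = \cG$, $(Z^{-1})^{-1} = Z$, and $(\cC^{-1})^{-1} = \cC$, so the equivalence is symmetric. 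In particular, $\cC$ is a $\g$-covering of $Z$ if and only if $\cC^{-1}$ is a $\g$-covering of $Z^{-1}$.

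Finally, for non-redundancy, the map $\z \mapsto \z^{-1}$ is an involution of $\Sigma^\Z$ (since $\eta \circ \eta = \mathrm{id}$), hence a bijection, so it preserves pairwise disjointness: $C_i \cap C_j = \emptyset$ if and only if $C_i^{-1}\cap C_j^{-1} = \emptyset$. This closes the last claim.

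The main subtlety I expect is purely bookkeeping: keeping the index shift induced by $\eta(k)=-1-k$ consistent between edges, labels, and initial-node conventions. Once this is nailed down — in particular, the choice of reindexing $e'_k = e_{-1-k}^\top$ that simultaneously yields the correct label sequence $\z^{-1}$ and preserves the designated initial node — the rest of the statement is formal and follows from involutivity of transposition at the graph level and of time-inversion at the sequence level.
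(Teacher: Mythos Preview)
Your proposal is correct and follows essentially the same route as the paper: reverse a bi-infinite walk in $\cG$ to obtain a bi-infinite walk in $\cG^\top$ labeled by $\z^{-1}$ with the same initial node, then invoke involutivity of both graph transposition and sequence time-inversion. The paper's proof is a one-paragraph sketch that simply asserts the reversed walk has the required properties and omits the non-redundancy claim, whereas you spell out the reindexing $e'_k=e_{-1-k}^\top$ and the disjointness argument explicitly; the underlying idea is identical.
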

\begin{proof}
Consider $\cG=(S,E)$ a graph presentation of $\cC$, with $S=\{s_C\}_{C\in \cC}$. Consider any $C\in \cC$ and any $\z\in C$, by definition there exist a bi-infinite walk $\pi=(\dots, e_{-1},e_0,e_1,\dots)$ in $\cG$ starting at $s_C$ and labeled by $\z$. It is easy to see that $\pi^{-1}=(\dots, e_0,e_{-1},e_{-2}, \dots)$ is a bi-infinite walk in $\cG^\top$ starting at $s_C$ and it is labeled by $\z^{-1}\in C^{-1}$. By arbitrariness of $C\in \cC$ and  $\z\in C$ and recalling that, for any set $C\subseteq\Sigma^\Z$  and for any graph we have $(C^{-1})^{-1}=C$ and $(\cG^\top)^\top=\cG$, we conclude.
\end{proof}

In what follows we identify and study important subclasses of coverings, which were already considered in the literature.

\begin{defn}[Particular case: Memory and Future coverings]
A $\g$-covering $\cC\subset \cP(Z)$ of a sofic shift is said to be a \emph{memory covering} if $C=\Pre(C)\cdot {\Post}(Z)$  for all $C\in \cC$.
Similarly, a $\g$-covering $\cC\subset \cP(Z)$ is said to be a \emph{future covering} if $C=\Pre(Z)\cdot\Post(C)$  for all $C\in \cC$. We note that $\cC$ is a memory covering if and only  if $\cC^{-1}$ is a future covering.
\end{defn}

In the following we formally characterize memory and future $\g$-coverings of the full shift $\Sigma^\Z$.

\begin{prop}\label{Prop:GraphsVsMemort}
Consider $\cC$  a $\g$-covering of $\Sigma^\Z$.  We have that 
\begin{enumerate}[label=(\Alph*):]
\item $\cC$ is a memory covering if and only if any $\cG=(S,E)$ graph  presentation of $\cC$ is complete.
\item If $\cC$  is a memory and non-redundant covering, then $\cG_\cC=(S,E)$, the graph presentation of $\cC$,  is complete and deterministic.
\item $\cC$ is a future covering if and only if any $\cG=(S,E)$ graph presentation of $\cC$  is co-complete.
\item If $\cC$ is a future and non-redundant covering then $\cG_\cC=(S,E)$, the graph presentation of $\cC$, is co-complete and co-deterministic.
\end{enumerate}
\end{prop}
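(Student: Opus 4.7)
The plan is to prove items (A) and (B) directly, and then derive (C) and (D) by time-inversion duality via Lemma~\ref{lemma:TimeInversion}. Throughout, since we are in the full-shift setting $Z=\Sigma^\Z$, the memory condition reads $C=\Pre(C)\cdot\Sigma^+$ and the future condition reads $C=\Sigma^-\cdot\Post(C)$, which will be the key algebraic handle.

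For (A), assuming $\cC$ is a memory covering, I would fix an arbitrary presentation $\cG=(S,E)$, pick any $s_C\in S$ and any $i\in\Sigma$, take any sample $\z\in C$, and splice $\z^-$ with the letter $i$ and an arbitrary forward tail to produce $\w$. The memory identity immediately yields $\w\in C$ with $w_0=i$, and then~\eqref{eq:Propagation1} forces the existence of some edge $(s_C,s_D,i)\in E$, proving completeness. Conversely, assuming a presentation $\cG$ is complete, I would pick any $\z\in\Pre(C)\cdot\Sigma^+$, choose a witness $\z'\in C$ with ${\z'}^-=\z^-$ (supplying a backward walk ending at $s_C$), and use completeness to build from $s_C$ a forward walk labeled by $\z^+$. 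Gluing the two pieces yields a bi-infinite walk in $\cG$ labeled by $\z$ with initial node $s_C$, hence $\z\in C$, giving $\Pre(C)\cdot\Sigma^+\subseteq C$ (the reverse inclusion being trivial).

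For (B), completeness of $\cG_\cC$ is inherited from (A), so only determinism remains. Proceeding by contradiction, suppose $(s_C,s_{D_1},i),(s_C,s_{D_2},i)\in E$ with $D_1\neq D_2$. I would extract witnesses $\z_1,\z_2\in C$ with $z_{j,0}=i$ and $\sigma(\z_j)\in D_j$, and form the splice $\w$ defined by $w_k=z_{1,k}$ for $k<0$, $w_0=i$, and $w_k=z_{2,k}$ for $k\geq 1$. Cutting the walk of $\z_1$ at position zero, inserting the edge $(s_C,s_{D_2},i)$, and attaching the forward part of the walk of $\z_2$ from position one onward yields a valid bi-infinite walk in $\cG_\cC$ labeled by $\w$ with initial node $s_C$; by construction $\sigma(\w)\in D_2$. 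On the other hand, since $\sigma(\z_1)\in D_1$ one has $\sigma(\w)^-=\z_1^-\cdot i=\sigma(\z_1)^-\in\Pre(D_1)$, so the memory property of $D_1$ promotes this to $\sigma(\w)\in\Pre(D_1)\cdot\Sigma^+=D_1$. The resulting overlap $\sigma(\w)\in D_1\cap D_2$ contradicts non-redundancy. This splicing is the main technical subtlety: one must track carefully, under the indexing convention underlying~\eqref{eq:Propagation1}-\eqref{eq:Propagation2}, where each edge of the three glued walk pieces starts and arrives.

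For (C) and (D), I would invoke Lemma~\ref{lemma:TimeInversion}: $\cC$ is future iff $\cC^{-1}$ is memory, with $\cG$ presenting $\cC$ iff $\cG^\top$ presents $\cC^{-1}$, and non-redundancy is clearly preserved under time-reversal. Since $\cG^\top$ is complete (resp.\ deterministic) iff $\cG$ is co-complete (resp.\ co-deterministic), applying (A) and (B) to $\cC^{-1}$ immediately yields (C) and (D) respectively.
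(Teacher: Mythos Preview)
Your proposal is correct and follows essentially the same approach as the paper's proof: item (A) via the equivalence between completeness and $\Post(C)=\Sigma^+$ using~\eqref{eq:Propagation1}, item (B) by a splicing/contradiction argument exploiting the memory identity to place a single shifted sequence in two distinct classes, and items (C)--(D) by transposition duality through Lemma~\ref{lemma:TimeInversion}. The only difference is that your treatment of (B) spells out the gluing of walks more explicitly than the paper, which compresses the same construction into the phrase ``by definition, $\sigma(\z)\in C_{q_1}\cap C_{q_2}$''.
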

\begin{proof}
\emph{(A):} Let us consider any graph presentation $\cG$ and suppose it is complete. By completeness, for any $s\in S$ and any $i\in \Sigma$ there exist $q\in S$ such that $(s,q,i)\in E$. That means that $\Post(C_s)\cap [i]_{[0,0]}\neq \emptyset$. Iterating the reasoning forward, since each node admits outgoing edges labeled by any $i\in \Sigma$,  it can be seen that $\Post(C_s)=\Sigma^+$, and thus $C_s=\Pre(C_s)\cdot \Sigma^+$; by arbitrariness of $s\in S$ we conclude.

Now suppose $\cC$ is a memory covering, that is, for any $C\in \cC$ we have $\Post(C)=\Sigma^+$. Thus, for any $i\in \Sigma$, $\Post(C)\cap [i]_{[0,0]}\neq \emptyset$ and thus by~\eqref{eq:Propagation1} there exists $D\in \cC$ such that $(s_C,s_D,i)\in E$, for any $\cG=(S,E)$ graph presentation of $\cC$.

\emph{(B):} Suppose that $\cC$ is a memory covering and non-redundant and consider $\cG_\cC$ the graph presentation of $\cC$ (recall Lemma~\ref{lemma:Unicity}). From Item \emph{(A)} we know that $\cG_\cC$ is complete, we now prove that it is also deterministic.  Suppose by contradiction that there are $q_1\neq q_2\in S$ such that $(s,q_j,i)\in E$ for $j\in \{1,2\}$. Then given any word $\z\in C_s=\Pre(C_s)\cdot \Sigma^+$ with $z_0=i$ we would have, by definition, $\sigma(\z)\in C_{q_1}\cap C_{q_2}$, contradicting the non-redundancy. 

\noindent
\emph{(C),(D):} Trivially follow by Items \emph{(A)} and \emph{(B)}, using Lemma~\ref{lemma:TimeInversion}.
\end{proof}
\paragraph{Dual Functions for Linear Systems}
We now show that, in the linear case, the correspondence between time-inversion and transpose graphs can provide a tool for stability analysis of dual dynamical systems, defined in what follows.
\begin{defn}[Dual System]
Consider a sofic shift $Z$ and a linear system, i.e. the function $f:\R^n\times Z\to \R^n$ is defined by $f(x,\z)=A(\z)x$ with $A:Z\to \R^{n\times n}$; let us denote the system by $(Z,A(\cdot))$. The \emph{dual system}, denoted by $[Z,A(\cdot)]^\top$, is defined by $(Z^{-1},A^*(\cdot))$, where $A^*(\z^{-1})=A(\z)^\top$, for all $\z\in Z$.
\end{defn}
We now consider, for any $n\in \N$, the set $\cN_n:=\{v:\R^n\to \R\;\vert\;v\text{ is a norm}\}$, the set of all the norms on $\R^n$.
We need a definition of dual norm, recalled in what follows.
\begin{defn}[Dual Norm]
Consider any norm $v\in \cN_n$, the dual norm of $v$, denoted by $v^\star$, is defined by 
\[
v^\star(x):=\sup_{y\in \R^n,\;v(y)=1}y^\top x\;\;\;\;\forall x\in \R^n.
\]
It can be seen that $v^\star\in \cN_n$.
\end{defn}
For the formal definition and further discussion on duality theory and dual norms we refer to~\cite[Part III]{RockConv}. We have the following duality result.
\begin{prop}[Dual Lyapunov functions]\label{prop:Duality}
Consider a sofic shift $Z$ and a linear systems defined by $f(x,\z)=A(\z)x$ with $A:Z\to \R^{n\times n}$. Consider a $\g$-covering $\cC=\{C_1,\dots,C_N\}$, a graph presentation $\cG=(S,E)$ of $\cC$, and  a $\cG$-based Lyapunov function  $W:\R^n\times \cC\to \R$ such that
$W(\cdot\,,C)\in \cN_n$, for all $C\in \cC$. Then the function $W^\top:\R^n\times \cC^{-1}\to \R$ defined by $W^\top(\cdot\,, C^{-1})=W^\star(\cdot\,, C)$ is a $\cG^\top$-based Lyapunov functions for the dual system defined by $(Z^{-1}, A^\star(\cdot))$.
\end{prop}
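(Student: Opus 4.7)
My approach is to verify that $W^\top$ satisfies both defining properties of a $\cG^\top$-based Lyapunov function from Definition~\ref{defn:FiniteGraphLyapunov}, applied to the dual system $(Z^{-1},A^*(\cdot))$.

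For the sandwich bound (condition~\eqref{Eq:Sandwich2}), I exploit the norm hypothesis. Since each $W(\cdot,C)$ is a norm on $\R^n$, the bound~\eqref{Eq:Sandwich2} combined with the finite-dimensional equivalence of norms yields constants $m_1,m_2>0$ with $m_1|x|\le W(x,C)\le m_2|x|$ uniformly in $x\in\R^n$ and $C\in\cC$. Taking the dual norm preserves the norm property and inverts these bounds, giving $m_2^{-1}|y|\le W^\star(y,C)\le m_1^{-1}|y|$ for every $y\in\R^n$ and $C\in\cC$; since $W^\top(\cdot,C^{-1})=W^\star(\cdot,C)$ by definition, and $\cC^{-1}$ has the same finite cardinality as $\cC$, the required bound for $W^\top$ follows with linear $\cK_\infty$ functions.

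For the decrease condition, the key tool is the following standard duality principle for norms in finite dimension: for any $v_1,v_2\in\cN_n$ and any $B\in\R^{n\times n}$,
\[
v_2(Bx)\le \gamma\, v_1(x)\ \ \forall x\in\R^n \;\Longleftrightarrow\; v_1^\star(B^\top y)\le \gamma\, v_2^\star(y)\ \ \forall y\in\R^n.
\]
This follows by rewriting the left inequality as $B\{v_1\le 1\}\subseteq\{v_2\le \gamma\}$, passing to polars, and using that the polar of the unit ball of a norm coincides with the unit ball of its dual norm (see~\cite[Part~III]{RockConv}). Fixing now an edge $(s_{C^{-1}},s_{D^{-1}},i)\in E^\top$ and $\w\in C^{-1}$ with $w_0=i$ and $\sigma(\w)\in D^{-1}$, the goal reduces to showing $W^\star(A^*(\w)\,y,D)\le \gamma\, W^\star(y,C)$ for every $y\in\R^n$. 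By Lemma~\ref{lemma:TimeInversion}, this $E^\top$-edge corresponds to an edge $(s_D,s_C,i)\in E$ of $\cG$. Writing $\w=\z^{-1}$ via the time-inversion bijection of Definition~\ref{defn:TimeInvCovering} and unwinding the definition of the shift on $Z^{-1}$ through the map $\eta(k)=-1-k$, the three conditions $\w\in C^{-1}$, $w_0=i$, $\sigma(\w)\in D^{-1}$ translate to a corresponding sequence $\z'\in D$ with $z'_0=i$ and $\sigma(\z')\in C$; this $\z'$ realizes the edge $(s_D,s_C,i)$ in $\cG$, and the defining formula $A^*(\z^{-1})=A(\z)^\top$ combined with this identification gives $A^*(\w)=A(\z')^\top$. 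The original decrease~\eqref{eq:Decreasing2} along $(s_D,s_C,i)$ then reads $W(A(\z')x,C)\le \gamma\, W(x,D)$ for all $x\in\R^n$, and applying the duality principle above with $v_1=W(\cdot,D)$, $v_2=W(\cdot,C)$, $B=A(\z')$ transposes this to exactly the desired inequality.

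The main obstacle I anticipate is purely bookkeeping: tracking indices across the time-inversion $\z\leftrightarrow\z^{-1}$, the interplay of the shift $\sigma$ with the edge reversal $E\leftrightarrow E^\top$, and the precise identification of $A^*(\w)$ with the correct transposed matrix $A(\z')^\top$ from the primal family. Once these correspondences are pinned down via Lemma~\ref{lemma:TimeInversion} and the definition of the dual system, the proof reduces to a single clean invocation of the norm-duality principle applied edge-by-edge in $\cG^\top$.
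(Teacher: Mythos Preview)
Your proposal is correct and follows essentially the same route as the paper's proof: the sandwich bound is handled directly via norm equivalence (the paper simply calls it ``trivial''), and the decrease is obtained from the very same norm-duality principle $v(Ax)\le\gamma w(x)\Leftrightarrow w^\star(A^\top x)\le\gamma v^\star(x)$, applied edge-by-edge after identifying edges of $\cG^\top$ with those of $\cG$ through the time-inversion correspondence of Lemma~\ref{lemma:TimeInversion}. The only difference is cosmetic: you start from an edge of $E^\top$ and pull it back to $E$, whereas the paper starts from an edge of $E$ and pushes it forward to $E^\top$.
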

\begin{proof}
The fact that $W^\top$ satisfies an inequality of the form~\eqref{Eq:Sandwich2} is trivial.
Then, the main tool for the proof is the following well-known duality result: Given any $v,w\in \cN_n$, any $A\in \R^{n\times n}$ and any $\gamma\in \R_+$ we have
\[
v(Ax)\leq \gamma w(x),\;\forall\;x\in \R^n\;\Leftrightarrow\;\;w^\star(A^\top x)\leq \gamma v^\star(x),\;\;\forall\;x\in \R^n.
\]
Now, consider $C,D \in\cC$ and $i\in \Sigma\;\text{ s.t. } (s_C,s_D,i)\in E$ and $\z\in Z$ such that  $\z\in C\;\wedge\;z_0=i\;\wedge\; \sigma(\z)\in D$ (and thus for which inequality~\eqref{eq:Decreasing2} holds). This is equivalent to
$(s_{D^{-1}},s_{C^{-1}},i)\in E^\top$, $\sigma(z)^{-1}\in D^{-1}$, $\z^{-1}\in C^{-1}$, $(\sigma(z))^{-1}_0=i$. Moreover note that $\sigma(\sigma(z)^{-1})=z^{-1}$. We have to verify
\[
W^\top(A^\star(\z^{-1})x,C^{-1})=W^\top(A(\z)^\top x,C^{-1})\leq \gamma W^\top(x,D^{-1}),\;\;\forall x\in \R^n,
\]
which is equivalent, by duality, to 
\[
W(A(\z)x, D)\leq \gamma W(x,C),\;\;\forall x\in \R^n,
\]
which holds by assumption, thus concluding the proof.
\end{proof}
We note that in the duality result in Proposition~\ref{prop:Duality}, other classes of convex functions (with respect to the class of norms) can be considered. For example, an equivalent result can be stated for positive definite, strictly convex and homogeneous of degree~$2$ functions, on the lines of what is done in~\cite{GoeHu06}.  We decided, for readability concerns, to present only the result involving norms.

 In this subsection, summarizing, we clarified the formal correspondence between time-inversion of the coverings, graph transposition and duality theory of linear systems. This allowed us to provide a novel algebraic construction of Lyapunov functions for one system, if one knows a solution for the time-inverted/dual system. This generalizes the observations provided, for the switched systems case, in~\cite{EssickLee14,AJPR:14}. For linear systems, we showed how this connects with the theory of duality of convex analysis. In the subsequent Section~\ref{sec:LinearAndNumerical} we will see how this can be leveraged for algorithmic purpose.

\subsection{Shifts of Finite Type: Generalized De Bruijn graphs and Asymptotic Converse Lyapunov Lemma}
In this subsection we provide a new class of graphs, presentations of a given sofic shift, which will provide canonical candidate structures of the Lyapunov construction in~Definition~\ref{defn:SequenceDepLyapFunct}. In order to allow this construction, we need a refinement of the definition of sofic shift.

\begin{defn}[Shift of $M$-Finite-Type]\label{Def:ShiftFiniteTYpe}
Given $M\in \N$, a shift $Z\subset \Sigma^\Z$ is said to be of \emph{$M$-finite type} if and only if it satisfies the following property:
\[
\forall \wi,\wh,\wj\in \Sigma^\star,\;\;\left(
\wi \cdot \wh\in \cL(Z)\;\wedge \wh\cdot\wj\in \cL(Z) \;\wedge\;|\wh|\geq M\;\Rightarrow \wi\cdot\wh\cdot\wj \in \cL(Z)\right).
\]
\end{defn}
Intuitively, a shift is of  $M$-finite type if it suffices to check subwords of length $M+1$ to decide if a word lies in $\cL(Z)$, or, equivalently, if the set of forbidden words can be constructed by forbidden words of length at most $M+1$. Moreover, a shift of finite type is in particular sofic, we refer to~\cite[Chapter 2]{LindMarcus95} for the formal discussion. Shifts of $0$-finite type are simply shifts of the form $\wt \Sigma^\Z$ for some $\wt \Sigma\subseteq \Sigma$. 
In the following we introduce a particular shift of $1$-finite type which we will also use in subsequent sections, as working example.
\begin{example}{\emph{(Golden Mean Shift)}~\cite[Example 1.2.3]{LindMarcus95}:}\label{ex:GoldenMean}
Given $\Sigma=\{a,b\}$, let us consider $Z\subset\Sigma^\Z$ the set of all the bi-infinite sequences with no two consecutive $a$'s. It can be seen that it is a shift of $1$-finite type.
\end{example}

 We use the notation: given any $K\in \N$ and any $\wi=(i_0,\dots i_{K-1})\in \Sigma^K$, we define $\wi^-:=(i_0,\dots, i_{K-2})$ and $\wi^+:=(i_1,\dots, i_{K-1})\in \Sigma^{K-1}$.
In the following we generalize the classical definition in~\cite{DeBru46}, in order to construct a covering that mixes information from memory and future.
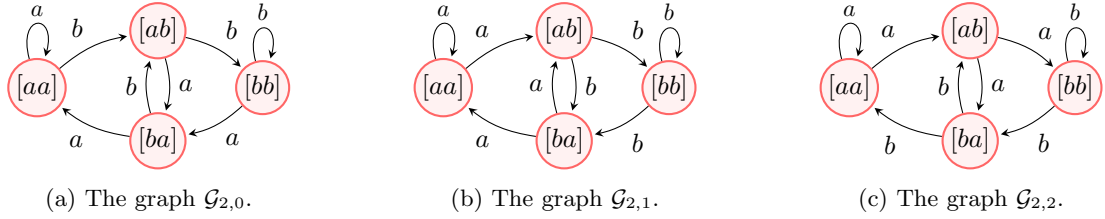
\begin{figure}[t!]
\vspace{1cm}
\begin{subfigure}{0.32\linewidth}
  \color{black}
  \centering
  \begin{tikzpicture}%
  [>=stealth,
  shorten >=1pt,
  node distance=1cm,
  on grid,
  auto,
  every state/.style={draw=red!60, fill=red!5, thick}
  ]
  \node[state, inner sep=1pt, minimum size=20pt] (left)                  {$[aa]$};
  \node[state, inner sep=1pt, minimum size=20pt] (right) [right=of left, xshift=2cm] {$[bb]$};
  \node[state, inner sep=1pt, minimum size=20pt] (upper) [above right=of left, xshift=0.9cm, yshift=0.05cm]{$[ab]$};
  \node[state, inner sep=1pt, minimum size=20pt] (below) [below right=of left, xshift=0.9cm, yshift=0cm]{$[ba]$};
  \path[->]
  (left) edge[loop above=60]     node             [scale=0.9]             {$a$} (left)
  (left) edge[bend left=15]     node                      {$b$} (upper)
  (upper) edge[bend left=15]     node                      {$a$} (below)
  (below) edge[bend left=15]     node                      {$a$} (left)
  (below) edge[bend left=15]     node                      {$b$} (upper)
  (right) edge[bend left=15]     node                      {$a$} (below)
  (upper) edge[bend left=15]     node                      {$b$} (right)
  (right) edge[loop above=300]     node            [scale=0.9]              {$b$} (right)
;
  \end{tikzpicture}
  \caption{The graph  $\cG_{2,0}$.}
  \end{subfigure}
  \begin{subfigure}{0.32\linewidth}
  \color{black}
  \centering
 \begin{tikzpicture}%
  [>=stealth,
  shorten >=1pt,
  node distance=1cm,
  on grid,
  auto,
  every state/.style={draw=red!60, fill=red!5, thick}
  ]
  \node[state, inner sep=1pt, minimum size=20pt] (left)                  {$[aa]$};
  \node[state, inner sep=1pt, minimum size=20pt] (right) [right=of left, xshift=2cm] {$[bb]$};
  \node[state, inner sep=1pt, minimum size=20pt] (upper) [above right=of left, xshift=0.9cm, yshift=0.05cm]{$[ab]$};
  \node[state, inner sep=1pt, minimum size=20pt] (below) [below right=of left, xshift=0.9cm, yshift=0cm]{$[ba]$};
  \path[->]
  (left) edge[loop above=60]     node             [scale=0.9]             {$a$} (left)
  (left) edge[bend left=15]     node                      {$a$} (upper)
  (upper) edge[bend left=15]     node                      {$b$} (below)
  (below) edge[bend left=15]     node                      {$a$} (left)
  (below) edge[bend left=15]     node                      {$a$} (upper)
  (right) edge[bend left=15]     node                      {$b$} (below)
  (upper) edge[bend left=15]     node                      {$b$} (right)
  (right) edge[loop above=300]     node            [scale=0.9]              {$b$} (right)
;
  \end{tikzpicture}
   \caption{The graph  $\cG_{2,1}$.}
  \end{subfigure}
\begin{subfigure}{0.32\linewidth}
    \color{black}
  \centering
   \begin{tikzpicture}%
  [>=stealth,
  shorten >=1pt,
  node distance=1cm,
  on grid,
  auto,
  every state/.style={draw=red!60, fill=red!5, thick}
  ]
  \node[state, inner sep=1pt, minimum size=20pt] (left)                  {$[aa]$};
  \node[state, inner sep=1pt, minimum size=20pt] (right) [right=of left, xshift=2cm] {$[bb]$};
  \node[state, inner sep=1pt, minimum size=20pt] (upper) [above right=of left, xshift=0.9cm, yshift=0.05cm]{$[ab]$};
  \node[state, inner sep=1pt, minimum size=20pt] (below) [below right=of left, xshift=0.9cm, yshift=0cm]{$[ba]$};
  \path[->]
  (left) edge[loop above=60]     node             [scale=0.9]             {$a$} (left)
  (left) edge[bend left=15]     node                      {$a$} (upper)
  (upper) edge[bend left=15]     node                      {$a$} (below)
  (below) edge[bend left=15]     node                      {$b$} (left)
  (below) edge[bend left=15]     node                      {$b$} (upper)
  (right) edge[bend left=15]     node                      {$b$} (below)
  (upper) edge[bend left=15]     node                      {$a$} (right)
  (right) edge[loop above=300]     node            [scale=0.9]              {$b$} (right)
;
  \end{tikzpicture}
  \caption{The graph  $\cG_{2,2}$.}
  \end{subfigure}
  \caption{ De Bruijn graphs of the full shift $\{a,b\}^\Z$ of order $2$.} \label{Fig:GeneralDBgraph}
\end{figure}

\begin{defn}[Generalized De-Bruijn graphs] \label{defn:GenDeBrunjii}
Consider the full shift $\Sigma^\Z$, and a $K\in \N$, and a $k\in [0,K]$.
The \emph{De-Bruijn graph of order $K$ and position $k$}, denoted by $\cG_{K,k}=(S_{K,k},E_{K,k})$ defined by
\begin{equation}\label{eq:DefinitionDeBrunji}
\begin{aligned}
S_{K,k}&:=\{\wi \in \Sigma^K\},\\
(\wi,\wj, h)\in E_{K,k}\;\;&\text{iff }\begin{cases}
j_{K-1}=h\;\wedge\;\wi^+=\wj^-,  & \text{ if }k=0,\\
i_{K-k}=h\;\;\;\wedge\;\wi^+=\wj^-,  &\text{ if }k \in[1,K].
\end{cases} 
\end{aligned}
\end{equation}
Consider $M\in \N$ and any shift of $M$-finite type $Z\subset \Sigma^\Z$. Consider any $K\geq M$, and any $k\in [0,K]$, then the \emph{$Z$-De-Bruijn graph of order $K$ and position $k$}, denoted by $\cG_{K,k}(Z)=(S_{K,k}(Z),E_{K,k}(Z))$ is defined as in~\eqref{eq:DefinitionDeBrunji}, with $S_{K,k}(Z):=\{\wi \in \Sigma^K\cap \cL(Z)\}$.
\end{defn}
The graphs of the form $\cG_{K,0}$ for $K\in \N$ have been introduced in the seminal paper~\cite{DeBru46}. We have generalized here this definition (introducing the \emph{position} $k\in [0,K]$) in order to handle more general coverings of shifts of finite type, as we analyze in what follows. 
Figure~\ref{Fig:GeneralDBgraph} represents the three De Bruijn graphs of order $2$ of the full shift $\{a,b\}^\Z$, while the De Bruijn graphs of order $1$ and $2$ for the golden mean shift defined in Example~\ref{ex:GoldenMean} are depicted in Figure~\ref{Fig:Ex1DBgraph}. Note that the graph already encountered in Figure~\ref{fig:FirstDeBrujin} is $\cG_{1,0}$.
In the following we present some important properties of the De Bruijn graphs.
\begin{prop}[Properties of De Bruijn Graphs]\label{prop:propDeBrunji}
{}Given any  $M\in \N$ and any shift of $M$-finite type $Z\subset \Sigma^\Z$ any $K\geq M$, and any $k\in [0,K]$. Then:
\begin{enumerate}[leftmargin=*]
\item  $\cZ(\cG_{K,k}(Z))=Z$.\label{item:DeBRunji1}
\item  $\cG_{K,k}(Z)=(\cG_{K,K-k}(Z^{-1}))^\top$; and thus in particular,  $\cG_{K,k}=\cG_{K,K-k}^\top$.\label{item:DeBRunji2}
\item $\cG_{K,k}(Z)$ is the graph presentation of the (not-redundant) $\g$-covering of $Z$ defined by 
\[
\cC:=\{[\wi]_{[-K+k,k-1]}\;\vert\;\wi\in \Sigma^K\cap\cL(Z)\}.
\]\label{item:DeBRunji2.5}
\item In particular, for all $K\in \N$, $\cG_{K,0}$ induces a memory covering and  $\cG_{K,K}$ induces a future covering. \label{item:DeBRunji3}
\end{enumerate}
\end{prop}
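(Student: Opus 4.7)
My overall plan is to establish Item~\ref{item:DeBRunji1} directly via a sliding-window correspondence, and then to derive Items~\ref{item:DeBRunji2}--\ref{item:DeBRunji3} as structural consequences of that correspondence: Item~\ref{item:DeBRunji2} via a word-reversal isomorphism, Item~\ref{item:DeBRunji2.5} by reading off where a walk ``starts'' in the correspondence, and Item~\ref{item:DeBRunji3} by combining Item~\ref{item:DeBRunji2.5} with Proposition~\ref{Prop:GraphsVsMemort}.

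For Item~\ref{item:DeBRunji1}, I would prove both inclusions of $\cZ(\cG_{K,k}(Z))=Z$ via the sliding-window map. Given $\z\in Z$, define for every $t\in\Z$ the node $\wi^{(t)}:=(z_{t-K+k},\dots,z_{t+k-1})\in \Sigma^K\cap \cL(Z)$; the overlap condition $\wi^{(t),+}=\wi^{(t+1),-}$ is immediate, and a direct check of~\eqref{eq:DefinitionDeBrunji} shows that the edge from $\wi^{(t)}$ to $\wi^{(t+1)}$ carries label $z_t$ (verified separately for $k=0$, where the label is the last letter of the arrival node, and for $k\in[1,K]$, where the label sits at position $K-k$ of the source). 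This exhibits a walk in $\cG_{K,k}(Z)$ labeled by $\z$, proving $Z\subseteq \cZ(\cG_{K,k}(Z))$. Conversely, given a walk $(\wi^{(t)})_{t\in \Z}$ with label sequence $(h_t)_{t\in\Z}$, the overlap and label conditions force each length-$K$ window $(h_\tau,\dots,h_{\tau+K-1})$ to coincide with some node $\wi^{(\tau')}\in \cL(Z)$. The $M$-finite-type property of $Z$ (Definition~\ref{Def:ShiftFiniteTYpe}), applied inductively to extend known subwords by a single letter using $K\geq M$, then propagates this local constraint into the statement that every finite subword of $(h_t)_{t\in \Z}$ lies in $\cL(Z)$, and Lemma~\ref{Lemma:EquivalenceLanguage} yields $(h_t)_{t\in \Z}\in Z$.

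For Item~\ref{item:DeBRunji2}, I would exhibit a node-level bijection $\varphi:(i_0,\dots,i_{K-1})\mapsto(i_{K-1},\dots,i_0)$, which maps $\Sigma^K\cap\cL(Z)$ onto $\Sigma^K\cap\cL(Z^{-1})$, and verify by a short case split on $k\in\{0\},[1,K-1],\{K\}$ that $(\wi,\wj,h)\in E_{K,k}(Z)$ iff $(\varphi(\wj),\varphi(\wi),h)\in E_{K,K-k}(Z^{-1})$: the overlap $\wi^+=\wj^-$ is self-dual under $\varphi$, and the label position transforms as $K-k\mapsto k$. This is precisely the isomorphism $\cG_{K,k}(Z)\cong (\cG_{K,K-k}(Z^{-1}))^\top$. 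Item~\ref{item:DeBRunji2.5} is then a direct corollary of the sliding-window correspondence: the node at index $0$ equals $(z_{-K+k},\dots,z_{k-1})$, so walks starting at $s_\wi$ correspond bijectively to sequences $\z\in Z$ with $(z_{-K+k},\dots,z_{k-1})=\wi$, i.e.~to $[\wi]_{[-K+k,k-1]}\cap Z$; non-redundancy is clear since distinct length-$K$ words yield disjoint cylinders. Finally, Item~\ref{item:DeBRunji3} follows by checking that on the full shift $\cG_{K,0}$ is complete and deterministic (given $(\wi,h)$ the unique arrival is $(i_1,\dots,i_{K-1},h)$), while $\cG_{K,K}$ is co-complete and co-deterministic (the unique source from $(\wj,h)$ is $(h,j_0,\dots,j_{K-2})$); applying Proposition~\ref{Prop:GraphsVsMemort}(A)--(D) yields the memory/future covering conclusions.

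The main obstacle is the index bookkeeping in Item~\ref{item:DeBRunji1}: one must align the sliding-window indexing with the two distinct labeling conventions ($k=0$ versus $k\in[1,K]$) so that consecutive nodes along the walk lie in $\cL(Z)$ \emph{and} so that the $M$-finite-type closure cleanly lifts the local ``every length-$K$ window in $\cL(Z)$'' condition to the global membership $\z\in Z$. Once this indexing is set up carefully, the remaining items reduce to mechanical verifications and appeals to Proposition~\ref{Prop:GraphsVsMemort} and Lemma~\ref{Lemma:EquivalenceLanguage}.
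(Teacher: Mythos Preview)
Your proposal is correct and follows essentially the same route as the paper: the word-reversal isomorphism for Item~\ref{item:DeBRunji2}, the identification of the starting node with the cylinder $[\wi]_{[-K+k,k-1]}$ for Item~\ref{item:DeBRunji2.5}, and the completeness/determinism check combined with Proposition~\ref{Prop:GraphsVsMemort} for Item~\ref{item:DeBRunji3} all match the paper's arguments. The only noteworthy difference is in Item~\ref{item:DeBRunji1}: the paper argues in one line that $\cL(\cZ(\cG_{K,k}(Z)))=\cL(Z)$ by construction (since edges concatenate words of length $K\geq M$, so Definition~\ref{Def:ShiftFiniteTYpe} applies) and then invokes Lemma~\ref{Lemma:EquivalenceLanguage}, whereas you unpack this via the explicit sliding-window bijection; your version is more detailed but the underlying mechanism is identical.
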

\begin{proof}
Consider any $K\geq M$, and any $k\in [0,K]$.
To prove Item~\ref{item:DeBRunji1}, we note that, by construction $\cL(\cZ(\cG_{K,k}(Z)))=\cL(Z)$, since in~\eqref{eq:DefinitionDeBrunji} we concatenate words of length of at least $M$, recall the Definition~\ref{Def:ShiftFiniteTYpe}. Thus, by Lemma~\ref{Lemma:EquivalenceLanguage}, we conclude.
To prove Item~\ref{item:DeBRunji2}, given $\wi=(i_0,\dots, i_{K-1})\in \Sigma^K$, let us denote by $\wi^\top=(i_{K-1},\dots, i_0)$. It is easy to see that
\[
(\wi,\wj,h)\in E_{K,k}(Z) \;\;\Leftrightarrow\;\;(\wj^\top,\wi^\top,h)\in E_{K,K-k}(Z^{-1}),
\]
Indeed,  we note that for any $\wi\in \Sigma^K\cap\cL(Z)$, $\wi^\top_{k-1}=\wi_{K-k}$, for any $k\in [1,K]$. Considering $k\in [1,K]$, we have that $(\wi,\wj,h)\in E_{K,k}(Z)$ if and only if $\wi^+=\wj^-\wedge \wj_{k-1}=h$ which is equivalent to $(\wj^\top)^+=(\wi^\top)^-\;\wedge \;\wj_{K-k}=h$ which in turn is equivalent to $(\wj^\top,\wi^\top,h)\in E_{K,K-k}(Z^{-1})$. The case $k=0$ is similar.

For Item~\ref{item:DeBRunji2.5}, consider any $\wi=(i_0,\dots, i_{K-1}) \in \Sigma^K\cap\cL(Z)$, and consider any $\z\in \Sigma^\Z\cap [\wi]_{-K+k,k-1}$, we have to show that there exist a bi-infinite walk in $\cG_{K,k}$ labeled by $\z$ and starting at $\wi$.  Let us proceed by steps: suppose $\z(k)=h$, and, since $\z\in[\wi]_{-K+k,k-1}$ we have $\z(0)=i=i_{K-k}$. Consider $\wj=\wi^+\cdot h=(i_1,\dots, i_{K-1},h)$. By definition, we have $(\wi,\wj,h)\in E_{K,k}(Z)$. For the forward part, we can thus proceed iteratively, considering $\wj$ and $\sigma(\z)\in [\wj]_{[k-K,k-1]}$. The backward reasoning and the case $k=0$ follow similar argument and are avoided here. We have thus proven that $\cZ(\cG_{K,k},\,\wi)=[\wi]_{[-K+k,k-1]}$ and by arbitrariness of $\wi\in \Sigma^K\cap \cL(Z)$ we conclude.

For Item~\ref{item:DeBRunji3} we see that $\cG_{K,0}$ is complete and deterministic, and thus recalling Lemma~\ref{lemma:TimeInversion} it is the presentation of a memory covering, we conclude $\cG_{K,K}$ induces a future covering, by Item~\ref{item:DeBRunji2}.
\end{proof}

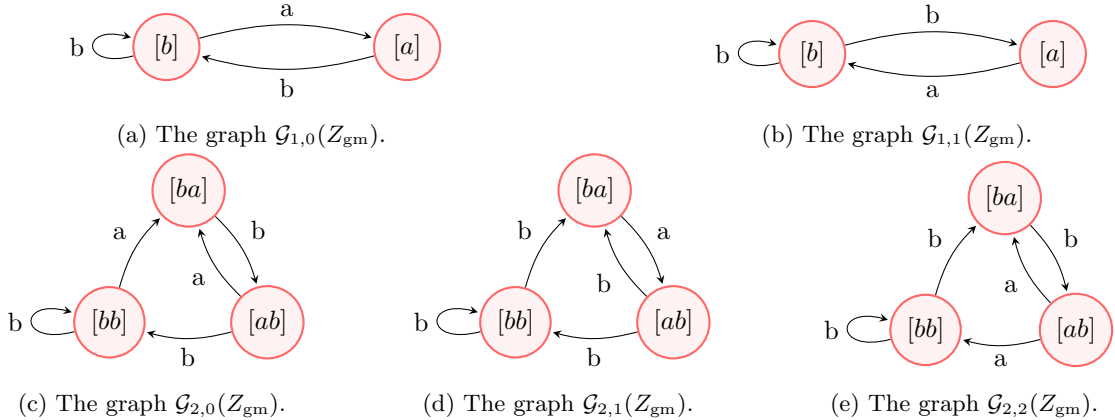
\begin{figure}[b!]
 \begin{subfigure}{0.48\linewidth}
    \color{black}
  \centering
  \begin{tikzpicture}%
      [>=stealth,
    shorten >=1pt,
    node distance=0.5cm,
    on grid,
    auto,
    every state/.style={draw=red!60, fill=red!5, thick}
    ]
  \node[state] (left)                  {$[b]$};
  \node[state] (right) [right=of left, xshift=2.7cm] {$[a]$};
  \path[->]
  (right)   edge[bend left=15] node                {b} (left)
  (left) edge[loop left=60]     node                      {b} (left)
(left)   edge[bend left=15] node                {a} (right)
  ;
  \end{tikzpicture}
  \caption{The graph $\cG_{1,0}(Z_{\text{gm}})$.}
  \end{subfigure}
\begin{subfigure}{0.48\linewidth}
    \color{black}
  \centering
  \begin{tikzpicture}%
      [>=stealth,
    shorten >=1pt,
    node distance=0.5cm,
    on grid,
    auto,
    every state/.style={draw=red!60, fill=red!5, thick}
    ]
  \node[state] (left)                  {$[b]$};
  \node[state] (right) [right=of left, xshift=2.7cm] {$[a]$};
  \path[->]
  (right)   edge[bend left=15] node                {a} (left)
  (left) edge[loop left=60]     node                      {b} (left)
(left)   edge[bend left=15] node                {b} (right)
  ;
  \end{tikzpicture}
  \caption{The graph $\cG_{1,1}(Z_{\text{gm}})$.}
  \end{subfigure}
\begin{subfigure}{0.32\linewidth}
  \color{black}
  \centering
  \begin{tikzpicture}%
      [>=stealth,
    shorten >=1pt,
    node distance=0.5cm,
    on grid,
    auto,
    every state/.style={draw=red!60, fill=red!5, thick}
    ]
  \node[state] (left)                  {$[bb]$};
  \node[state] (right) [right=of left, xshift=1.6cm] {$[ab]$};
  \node[state] (upper) [above right=of left, xshift=0.7cm, yshift=1.4cm]{$[ba]$};
  \path[->]
  (left) edge[bend left=15]     node                      {a} (upper)
  (right)   edge[bend left=15] node                {b} (left)
  (right)   edge[bend left=15] node                {a} (upper)
  (left) edge[loop left=60]     node                      {b} (left)
(upper)   edge[bend left=15] node                {b} (right)
  ;
  \end{tikzpicture}
  \caption{The graph  $\cG_{2,0}(Z_{\text{gm}})$.}
  \end{subfigure}
  \begin{subfigure}{0.32\linewidth}
    \color{black}
  \centering
  \begin{tikzpicture}%
      [>=stealth,
    shorten >=1pt,
    node distance=0.5cm,
    on grid,
    auto,
    every state/.style={draw=red!60, fill=red!5, thick}
    ]
  \node[state] (left)                  {$[bb]$};
  \node[state] (right) [right=of left, xshift=1.6cm] {$[ab]$};
  \node[state] (upper) [above right=of left, xshift=0.7cm, yshift=1.4cm]{$[ba]$};
  \path[->]
  (left) edge[bend left=15]     node                      {b} (upper)
  (right)   edge[bend left=15] node                {b} (left)
  (right)   edge[bend left=15] node                {b} (upper)
  (left) edge[loop left=60]     node                      {b} (left)
(upper)   edge[bend left=15] node                {a} (right)
  ;
  \end{tikzpicture}
  \caption{The graph  $\cG_{2,1}(Z_{\text{gm}})$.}
  \end{subfigure}
\begin{subfigure}{0.32\linewidth}
    \color{black}
  \centering
  \begin{tikzpicture}%
      [>=stealth,
    shorten >=1pt,
    node distance=0.5cm,
    on grid,
    auto,
    every state/.style={draw=red!60, fill=red!5, thick}
    ]
  \node[state] (left)                  {$[bb]$};
  \node[state] (right) [right=of left, xshift=1.5cm] {$[ab]$};
  \node[state] (upper) [above right=of left, xshift=0.7cm, yshift=1.4cm]{$[ba]$};
  \path[->]
  (left) edge[bend left=15]     node                      {b} (upper)
  (right)   edge[bend left=15] node                {a} (left)
  (right)   edge[bend left=15] node                {a} (upper)
  (left) edge[loop left=60]     node                      {b} (left)
(upper)   edge[bend left=15] node                {b} (right)
  ;
  \end{tikzpicture}
  \caption{The graph  $\cG_{2,2}(Z_{\text{gm}})$.}
  \end{subfigure}
  \caption{First De Bruijn graphs of the golden-mean shift $Z_{\text{gm}}$ in Example~\ref{ex:GoldenMean}.} \label{Fig:Ex1DBgraph}
\end{figure}
Consider again the three De Bruijn graphs of order $2$ of the shift $\{a,b\}^\Z$ depicted in Figure~\ref{Fig:GeneralDBgraph}.
Note that by Proposition~\ref{prop:propDeBrunji} the graph $\cG_{2,0}$  is the presentation of a \emph{memory} non-redundant covering, $\cG_{2,2}$ is the presentation of a \emph{future} non-redundant covering, while $\cG_{2,1}$ is the presentation of a covering that is neither a memory- nor future- covering. Note that the three graphs have the same number of nodes and edges and thus can be considered to be of the same complexity, in a sense that we clarify in following sections, in which we compare the arising Lyapunov conditions.

Inspired by the results in~\cite{AJPR:14,LeeDull06,Thesis:Essick}, we now show that De Bruijn graphs are a natural candidate for Lyapunov certificates for switched linear systems, when considering the class of quadratic functions.
\begin{thm}[Asymptotic Converse Lyapunov Theorem]\label{prop:DeBrunjiConverse}
Consider a shift of finite type $Z$ and a switched linear systems, i.e. the function $f:\R^n\times Z\to \R^n$ is defined by $f(x,\z)=A(z_0)x$ with $A:\Sigma\to \R^{n\times n}$. Let us fix  $\gamma\in [0,1)$. The following statements are equivalent:
\begin{enumerate}[leftmargin=0.6cm]
\item[$(1)$]  For any $\wt \gamma\in (\gamma,1)$, system~\eqref{eq:System} is UES  with decay rate $\wt \gamma$;
\item[$(2)$]  For any $\wt \gamma\in (\gamma,1)$ there exist  $K\geq 0$ and $k\in [0,K]$ such that
there exists a quadratic $\cG_{K,k}(Z)$-based Lyapunov function. More explicitly, there exist $M_1,M_2>0$ and $Q:\cL(Z)\cap \Sigma^K\to \mathbb{S}^n_{+}$ such that
\begin{subequations}
\begin{equation}\label{eq:FINITESandwichLinear}
 M_1I_n\preceq Q(\wi)\preceq M_2 I_n,\;\;\;\forall\;\wi\in S_{K,k}(Z),
\end{equation}
\begin{equation}\label{eq:FINITEPseudoRiccatti}
A(h)^\top Q(\wj)A(h)\prec \wt \gamma^2\, Q(\wi)\;\;\;\forall\;(\wi,\wj,h)\in E_{K,k}(Z).
\end{equation}
\end{subequations}
\end{enumerate}
\end{thm}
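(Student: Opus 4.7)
\emph{Plan.} The proof splits into two implications. The easier direction $(2)\Rightarrow(1)$ relies on the already-established sd-LF theory. The harder direction $(1)\Rightarrow(2)$ is proved by exhibiting, for the \emph{future-based} De Bruijn presentation $\cG_{K,K}(Z)$ with $K$ large, a truncated exponentially weighted quadratic form — exactly in the spirit of the proof of Theorem~\ref{Lemma:ConverseLinear}.

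For $(2)\Rightarrow(1)$, assume that~\eqref{eq:FINITESandwichLinear}--\eqref{eq:FINITEPseudoRiccatti} hold at rate $\wt\gamma$ for some $K,k$. Setting $W(x,\wi):=x^\top Q(\wi)x$ yields a quadratic $\cG_{K,k}(Z)$-based Lyapunov function in the sense of Definition~\ref{defn:FiniteGraphLyapunov}. Theorem~\ref{prop:FromFiniteToInfinite} (applied to $\cV_{\max}$) then produces a sd-LF on $\R^n\times Z$, still sandwiched between two positive multiples of $|x|^2$ and decreasing at rate $\wt\gamma^{2}$ along~\eqref{eq:System}. Theorem~\ref{lemma:ConverseNonLinear} yields GUAS, and the standard homogeneity argument used to prove Theorem~\ref{Lemma:ConverseLinear} upgrades this to UES with decay rate $\wt\gamma$.

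For $(1)\Rightarrow(2)$, fix $\wt\gamma\in(\gamma,1)$ and pick $\wt\gamma_0\in(\gamma,\wt\gamma)$. By~(1) there exists $M>0$ such that $\|A(z_{k-1})\cdots A(z_0)\|\leq M\,\wt\gamma_0^{k}$ for every $\z\in Z$ and every $k\in\N$. I use the future-based presentation $\cG_{K,K}(Z)$ with $K$ to be selected, and on every $\wi=(i_0,\ldots,i_{K-1})\in S_{K,K}(Z)=\cL(Z)\cap\Sigma^{K}$ I define
\[
Q(\wi)\;:=\;\sum_{j=0}^{K-1}\wt\gamma^{-2j}\,\Pi_j(\wi)^\top\Pi_j(\wi),\qquad \Pi_j(\wi):=A(i_{j-1})\cdots A(i_0),\quad \Pi_0:=I.
\]
Membership of $\wi$ in $\cL(Z)$ guarantees an extension $\z\in Z\cap[\wi]_{[0,K-1]}$, so that $\Pi_j(\wi)$ is the $j$-step transition matrix of an admissible trajectory of~\eqref{eq:System} and the UES bound yields $\|\Pi_j(\wi)\|\leq M\wt\gamma_0^{j}$. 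Summing the geometric series provides the uniform sandwich $I\preceq Q(\wi)\preceq \frac{M^{2}}{1-(\wt\gamma_0/\wt\gamma)^{2}}\,I$, proving~\eqref{eq:FINITESandwichLinear}.

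For~\eqref{eq:FINITEPseudoRiccatti}, note that Definition~\ref{defn:GenDeBrunjii} with $k=K$ forces any edge $(\wi,\wj,h)\in E_{K,K}(Z)$ to satisfy $h=i_0$ and $\wj=(i_1,\ldots,i_{K-1},j_{K-1})$, whence $\Pi_{j}(\wj)\,A(h)=\Pi_{j+1}(\wi)$ for every $j\leq K-1$. Substituting into the definition of $Q(\wj)$ and re-indexing gives the telescoping identity
\[
A(h)^\top Q(\wj)\,A(h)\;=\;\wt\gamma^{2}\,Q(\wi)\,-\,\wt\gamma^{2}\,I\,+\,\wt\gamma^{\,2-2K}\,\Pi_K(\wi)^\top\Pi_K(\wi).
\]
The required strict inequality reduces therefore to $\Pi_K(\wi)^\top\Pi_K(\wi)\prec \wt\gamma^{2K}\,I$, which, using $\|\Pi_K(\wi)\|\leq M\wt\gamma_0^{K}$, holds uniformly in $\wi$ as soon as $K>\log M/\log(\wt\gamma/\wt\gamma_0)$. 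The principal subtlety is this telescoping: the clean identity is available \emph{precisely because} the choice $k=K$ aligns the edge label $h$ with the leftmost letter $i_0$ of $\wi$. For a generic position $k\in[0,K]$ this alignment breaks and one would have to either coarsen via a graph morphism back to the $k=K$ case, or combine Proposition~\ref{prop:Duality} with Lemma~\ref{lemma:TimeInversion}; since the statement only demands existence of some $(K,k)$, the choice $k=K$ is sufficient.
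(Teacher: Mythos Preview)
Your proof is correct and follows essentially the same approach as the paper: the paper also proves only the nontrivial direction $(1)\Rightarrow(2)$, also selects the future-based presentation $\cG_{K,K}(Z)$, and defines the same truncated weighted Gramian $Q(\wi)=\sum_{j=0}^{K-1}\wt\gamma^{-2j}\Pi_j(\wi)^\top\Pi_j(\wi)$, obtaining the identical telescoping identity and concluding by taking $K$ large enough so that $M(\wt\gamma_0/\wt\gamma)^{K}<1$. Your treatment is in fact slightly more explicit than the paper's (you spell out $(2)\Rightarrow(1)$ via Theorem~\ref{prop:FromFiniteToInfinite}, and you note why the alignment $h=i_0$ is what makes the telescoping clean), but the core argument is the same.
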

\begin{proof}
The proof substantially follows the ideas of  Theorem~\ref{Lemma:ConverseLinear}. 
We thus prove only $(1) \;\Rightarrow\;(2)$, and only for the future-based conditions, i.e. considering De Bruijn graphs of the form $\cG_{K,K}(Z)$. Suppose $Z$ is of $M$-finite type and consider any $K\geq M$.
Consider any $\wt \gamma\in (\gamma,1)$ and any $\wt \gamma_1\in (\gamma,\wt \gamma)$,  we define
\[
Q_+(\wi)=\sum_{k=0}^{K-1}\frac{1}{\wt \gamma^{2k}}S(k,\wi)^\top S(k,\wi),
\]
for any $\wi \in \cL(Z)\cap \Sigma^K$,  where $S(k,\wi)$ is the state-transition matrix corresponing to $\wi=(i_0,\dots, i_{K-1})\in  \cL(Z)\cap \Sigma^K$, i.e. $S(k,\wi):=A(i_{k-1})\cdot\cdot A(i_0)$, for any $k\leq K$, with the convention that $S(0,\wi)=I_n$ for all $\wi \in \cL(Z)\cap \Sigma^K$.
The positive definiteness in~\eqref{eq:FINITESandwichLinear} can be proven exactly as in Theorem~\ref{Lemma:ConverseLinear}. Let thus consider $(\wi,\wj,h)\in E_{K,K}(Z)$ i.e. $i_0=h$, and $\wi^+=\wj^-$, computing
\[
\begin{aligned}
A(h)^\top Q_+(\wj)A(h)&=\sum_{k=0}^{K-1}\frac{1}{\wt \gamma^{2k}}A(h)^\top S(k,\wj)^\top S(k,\wj)A(h)\\&=\wt \gamma ^2\sum_{k=0}^{K}\frac{1}{\wt \gamma^{2k}} S(k,h\cdot \wj)^\top S(k,h\cdot \wj)-\wt\gamma^2I_n\\&=\wt \gamma^2 Q_+(\wi)+\wt \gamma^2\frac{1}{\wt \gamma^{2K}} S(K,h\cdot \wj)^\top S(K,h\cdot \wj)-\wt\gamma^2I_n\\&\preceq\wt \gamma^2 Q_+(\wi)+\wt \gamma^2\left (\frac{M \wt \gamma_1^{2K}}{\wt \gamma^{2K}}-1  \right )I_n.
\end{aligned}
\]
Since $\wt \gamma_1<\wt\gamma$, we have that there exists a $K$ large enough such that $M\frac{\wt \gamma_1}{\wt \gamma}^{2K}\leq 1$, concluding the proof.
\end{proof}

\begin{rmk}
The Lyapunov conditions in Theorem~\ref{prop:DeBrunjiConverse}, in the restricted case of $\cG_{K,0}(Z)$ (resp. $\cG_{K,K}(Z)$) have been already introduced in~\cite{LeeDull06,DonDull20, EssickLee14,Thesis:Essick}. Indeed, these are natural conditions since they represent, as stated in Proposition~\ref{prop:propDeBrunji},  multiple Lyapunov functions which depend on a fixed number of previous (resp. future) values of the discrete state, i.e. they are presentations of the memory- (resp. future-) uniform coverings. Differently from the literature, Definition~\ref{defn:GenDeBrunjii} we introduce (the conditions arising from) generalized graphs of the form $\cG_{K,k}(Z)$ for $k\in [1,K-1]$. Indeed, these graphs provide a canonical way of considering coverings in which \emph{both} past and future values of the discrete state are considered, i.e. providing \emph{mixed} memory/future conditions. We show in the next section that, even in the case of switched linear systems, this approach can improve the numerical performance  with respect to the classical De Bruijn-based conditions.
\end{rmk}

\section{Numerical Examples: Stability of Switched Linear Systems}\label{sec:LinearAndNumerical}
In this section we apply our results to some numerical examples, in order to demonstrate that the abstract setting developed here allows to improve the efficiency of the numerical schemes.  We first recall the definition and properties of the (constrained) joint spectral radius of linear switched systems, since it will be used as a measure of the performance of the proposed approaches.
\subsection{The (constrained) joint spectral radius and stability guarantees for linear switched systems}
In this subsection, we consider linear switched systems. In particular, let us consider a finite alphabet $\Sigma$ and a set of matrices $\cA=\{A(i)\}_{i\in \Sigma}\subset \R^{n\times n}$. 
Given a sofic shift $Z$ we define the $Z$-\emph{constrained joint spectral radius} ($Z$-JSR) of $\cA$  by
\begin{equation}
\rho(\cA,Z)=\lim_{k\to \infty}\sup_{\z\in Z}\|S(k,\z)\|^{\frac{1}{k}}
\end{equation}
where, we recall, $S(\cdot,\z)$ denotes the state-transition matrix, see the formal definition in~\eqref{eq:TransitionMatrix}, in Appendix. We refer to~\cite{Jung09}~\cite{PEDJ:16} for equivalent definitions and further discussion.
Intuitively $\rho(\cA,Z)$ characterizes the UES property of the corresponding linear switched system since it is equal to the infimum over the $\gamma$ for which~\eqref{eq:ExponentialStabilityIneq} holds (see \cite{Jung09}), and thus, the system is UES if and only if $\rho(\cA,Z)<1$. It is well-known that, in a general case, computing and approximating the (constrained) joint spectral radius is numerically demanding, see~\cite{TsiBlo97}.
On the other hand, sequence-dependent Lyapunov functions represent a tunable and handy tool to provide upper bounds for $\rho(\cA,Z)$. More precisely,  let us consider a $\g$-covering $\cC=\{C_1,\dots, C_K\}$, a corresponding graph presentation $\cG=(S,E)$, a family of candidate Lyapunov functions $\cV\subset \cH_+(\R^n,\R)$ (a.k.a. the \emph{template}) where
\begin{equation}
 \cH_+(\R^n,\R)=\left\{v:\R^n\to \R\;\Big\vert \begin{aligned}\;v&\text{ continuous, positive definite}\\ &\text{and positively homogeneous}\footnote{A function $f:\R^n\to \R$ is positively homogeneous if $f(ax)=af(x)$, for any $a\in \R_+$ and any $x\in \R^n$.}\end{aligned}\right\}.
\end{equation} 
 Then, we define $\rho_{\cG,\cV}(\cA,Z)$ as the infimum over the $\rho\in \R_+$ for which functions in $\cV$ satisfying~conditions in Definition~\ref{defn:FiniteGraphLyapunov} exist.
For more related discussion, we refer to~\cite{AJPR:14,DebDel22}. It is important to note that, for any $\cG$ and any $\cV$, we have
\[
\rho_{\cG,\cV}(\cA,Z)\geq \rho(\cA,Z),
\]
i.e. any graph-based Lyapunov function, in the context of switched linear systems, provides an upper bound on the $Z$-JSR. 
In this formalism, Theorem~\ref{lemma:FiniteCoveringLemma}  can be compactly re-stated as follows:
Given any sofic shift $Z\subset \Sigma^\Z$,  any family of matrices $\cA=\{A(i)\}_{i\in \Sigma}\subset \R^{n\times n}$ and any $\cG$, graph presentation of $Z$, it holds that
\begin{equation}\label{eq:cJSRasINF}
\rho(\cA,Z)=\rho_{\cG,\cH_+(\R^n,\R)}(\cA,Z).
\end{equation}

The fact that the (constrained)-joint spectral radius can be obtained as the infimum over the decay rate of Lyapunov functions as in~\eqref{eq:cJSRasINF} is classical, and, at least for the unconstrained case, dates back to the seminal result of~\cite{Rota1960ANO}. For further discussion see, for example,~\cite[Section 1.2.2]{Jung09} and~\cite[Section 2.1]{PEDJ:16}. 

In the following subsection we use particular graph presentations and templates in order to test and verify how the estimation of the $Z$-JSR improves for different graph-induced coverings (and the corresponding graph presentations). 
\subsection{Numerical Examples}
In the papers~\cite{DelRosJun23b,DelRosJun23a} a comparison of Lyapunov conditions is provided for different memory and future coverings, considering the class of quadratic candidate Lyapunov functions and in the arbitrary switching case (i.e. considering only the full shift $\Sigma^\Z$). It is shown that considering uniform (past or future) horizons of observation (i.e., a constant length of the cylinders composing the coverings) does not always provides the best estimation of the JSR. 

In this subsection, instead, applying the general framework and results introduced in previous sections, we compare general finite covering-based conditions, for more general \emph{sofic shifts}. Moreover, we consider templates of candidate Lyapunov functions that differ from the classical case of \emph{quadratic} functions, to show the generality of the proposed techniques.
\begin{example}\label{ex:positiveExample}{\emph{(Positive system evolving on the golden-mean shift)}}\\
Consider the non-negative matrices
\[
A(a):=\begin{bmatrix} 0.2 & \,0.1& \,0\\ 0.6 & \,0.6& \,0.5\\0.6 & \,0.3& \,0.2\end{bmatrix},\;\;\; A(b):=\begin{bmatrix} 0.1 & \,0.2& \,0.3\\ 0.2 & \,0.1& \,0.5\\0.1 & \,0.6& \,0.7\end{bmatrix},
\]
define $f(x,i)=A(i)x$ for any $i\in \Sigma=\{a,b\}$, and consider the golden mean shift $Z_{\text{gm}}$, defined in Example~\ref{ex:GoldenMean}.
We want to compare the stability conditions arising from the $5$ different presentations of $Z_{\text{gm}}$ provided by the first De Bruijn graphs depicted in Figure~\ref{Fig:Ex1DBgraph}. Moreover, since $A_a$ and $A_b$ are non-negative, we can consider the template of \emph{co-positive linear norms} defined as follows. Given a positive vector $v\in \R^n$, $v>_c0$ (here and in what follows $>_c$ denotes the component-wise inequality sign), we define $p_v:\R^n\to \R$ by $p_v(x)=v^\top |x|$ where  we define $|x|:=(|x_1|,\dots,|x_n|)^\top\in \R^n_{\geq 0}$. The function $p_v$ is refereed to as the \emph{primal copositive linear norm} associated to $v$. Given an $n\in \N$, we thus defined the template of primal copositive linear norms by
\[
\cP=\{p_v:\R^n\to \R\;\vert\;v\in \R^n,\;v>_c0\}.
\]
In our context, primal copositive linear norms provide a convenient template of candidate Lyapunov functions, since it can be verified that, for any non-negative matrix $A\in \R^{n\times n}_{\geq 0}$, we have
        \[
p_{v_2}(Ax)\leq p_{v_1}(x),\;\;\forall x\in \R^n\;\;\Leftrightarrow\;\; A^\top v_2-v_1\leq_c 0.
        \]
 Thus, in this context, the feasibility of Lyapunov inequalities encoded in a graph, as in Definition~\ref{defn:FiniteGraphLyapunov}, can be checked via \emph{Linear Programming}. For more details concerning this family of functions and its use in Lyapunov theory, we refer to~\cite{ForVal12,DebDel22} and references therein.
We report in Table~\ref{Table:FirstTable} the corresponding upper bounds provided by computing $\rho_{\cG,\cP}(\cA,Z_{gm})$ with respect to the $5$ graphs in Figure~\ref{Fig:Ex1DBgraph}.
\begin{table}[b!]
\vspace{0.1cm}
\centering
\caption{\normalfont {\footnotesize Numerical upper bounds for~Example~\ref{ex:positiveExample}, obtained with De Bruijn graphs in Figure~\ref{Fig:Ex1DBgraph}.}}
\begin{tabular}{ |c|c|c||c|c|c|} 
 \hline
Graph: & $\cG_{0,1}(Z_{\text{gm}})$ & $\cG_{1,1}(Z_{\text{gm}})$ & $\cG_{0,2}(Z_{\text{gm}})$  & $\cG_{1,2}(Z_{\text{gm}})$&$\cG_{2,2}(Z_{\text{gm}})$\\ 
 \hline
  $\rho_{\cG,\cP}(\cA,Z_{\text{gm}})$ & $1.2714$ & $1.0993$ & $1.2714$ &$1.0993$ &$1.0944$ \\
  \hline
\end{tabular}\label{Table:FirstTable}
\end{table}
We note that, for this example, it is more efficient from a numerical point of view, to use the De Bruijn graphs related to the ``future'', i.e. $\cG_{1,1}(Z_{\text{gm}})$ and $\cG_{2,2}(Z_{\text{gm}})$. Indeed, note that the dimension of the linear program induced by $\cG_{1,1}(Z_{\text{gm}})$ (resp. $\cG_{2,2}(Z_{\text{gm}})$) is equal, in terms of number of variables and inequalities, to the one induced by $\cG_{0,1}(Z_{\text{gm}})$ (resp. $\cG_{0,2}(Z_{\text{gm}})$ and $\cG_{1,2}(Z_{\text{gm}})$). On the other hand the optimal value (i.e. the value of $\rho_{\cG,\cP}(\cA,Z_{\text{gm}}))$) obtained by considering  $\cG_{1,1}(Z_{\text{gm}})$ (resp. $\cG_{2,2}(Z_{\text{gm}})$) is strictly smaller than the one provided by $\cG_{0,1}(Z_{\text{gm}})$ (resp. $\cG_{0,2}(Z_{\text{gm}})$ and $\cG_{1,2}(Z_{\text{gm}})$).
\end{example}
\begin{figure}[t!] 
  \centering
  \vspace{0.0cm}

\begin{tikzpicture}

 \tikzset{venn circle/.style={draw,circle,thick,minimum width=4cm,fill=#1,opacity=0.2}}
  \node [venn circle = blue] (A) at (0,0) {};
  \node [venn circle = red] (B) at (60:1.5cm) {};
  \node [venn circle = green] (C) at (0:1.5cm) {};
\node[left] at (-0.6,-0.5) {$424$}; 
\node[left] at (-1.9,-0.5) {$X_{2,2}$}; 
  \node[left] at (-0.1,1.3 ) {$609$}; 
  \node[left] at (3.2,-0.5) {$1122$};   
    \node[left] at (4.3,-0.5) {$X_{2,1}$};  
  \node[right] at (1.5,1.3 ) {\;\;$1092$};   
  \node[below] at (0.7,0.6){$4911$};
    \node[below] at (0.7,2.8){$372$};
     \node[below] at (2.5,3.2){$X_{2,0}$};
     \node[below] at (0.7,-0.7){$1470$};
\end{tikzpicture} 
\caption{Statistical comparison of the graphs in Figure~\ref{Fig:GeneralDBgraph} performed in Example~\ref{ex:StatCOmparison}. The figure shows, for each subset of the set of considered De Bruijn criteria, the number of examples on which these particular criteria give the best performance}\label{fig:EulerVenn}
  \end{figure}

\begin{example}\label{ex:StatCOmparison}{\emph{(Statistical comparison of De Bruijn graphs of equal order).}}\\
In this example we consider the full shift $\Sigma^\Z$ with $\Sigma=\{a,b\}$ and we compare the performance of the three
De Bruijn graphs of order $2$ depicted in Figure~\ref{Fig:GeneralDBgraph}. In this case we consider the template of \emph{quadratic-diagonal norms} defined as follows: given any positive vector $v\in \R^n_{>0}$, considering the positive definite diagonal matrix $P_v=\text{diag}(v)\in \R^{n\times n}$, we define $q_v:\R^n \to \R$ by $q_v(x):=\sqrt{x^\top P_vx}$.
\[
\cD:=\{q_v:\R^n\to \R\;\vert\;v\in \R^n_{>0}\}.
\]

To perform the comparison, we sample $10000$ random couples of matrices $\cA=\{A_a,A_b\}\subset \R^{2\times 2}$, with each element uniformly sampled in the interval $[-10,10]$. For each sampled couple of matrices $\cA_\omega$, we compute $\rho_{\cG,\cD}(\cA_\omega,\Sigma^\Z)$ for each one of the De Bruijn graphs $\cG_{2,t}$ for $t\in \{0,1,2\}$, solving the resulting optimization problem. The numerical results are reported in the Euler-Venn diagram in Figure~\ref{fig:EulerVenn}. The diagram reads as follows: the numerical value in the set $\cG_{2,t}\setminus(\cG_{2,t'}\cup\cG_{2,t''})$ (for pairwise distinct $t,t'$ and $t''$) corresponds to the number of systems for which the graph $\cG_{2,t}$ provides a strictly better estimation of the JSR with respect to $\cG_{2,t'}$ and $\cG_{2,t''}$, (i.e. the number of instances for which $\rho_{\cG_{2,t},\cD}(\cA_\omega,\Sigma^\Z)<\rho_{\cG_{2,t'},\cD}(\cA_\omega,\Sigma^\Z)$ and $\rho_{\cG_{2,t},\cD}(\cA_\omega,\Sigma^\Z)<\rho_{\cG_{2,t''},\cD}(\cA_\omega,\Sigma^\Z)$). Similarly the value in the intersections corresponds to instances in which some estimations coincide: for example the intersection $X_{2,0}\cap X_{2,1}\cap X_{2,2}$ corresponds to the case in which the obtained upper bound on $\rho(\cA_\omega, \Sigma^\Z)$ coincides, for the three graphs.  From this analysis, despite that in almost half of the cases the three graphs/coverings lead to the same upper bounds, statistically the graph $\cG_{2,1}$ performs better with respect to $\cG_{2,0}$ and $\cG_{2,2}$. Recalling Definition~\ref{defn:GenDeBrunjii}, this implies that, at least for this class of systems and for the chosen templates, conditions arising from \emph{mixed future/memory} coverings, provide more appealing stability conditions with respect to purely memory- and future-based conditions. Indeed, recall from~Proposition~\ref{prop:propDeBrunji}, that the three graphs $\cG_{2,0}$, $\cG_{2,1}$ $\cG_{2,2}$ represent graph-induced coverings defined by cylinders of length $2$, but interpreted differently, in term of possible past and future values of switching sequences. Rephrasing,  the graph $\cG_{2,1}$, which intuitively encodes the idea of ``looking'' one step back and one ahead, performs better that the one looking $2$ steps back ($\cG_{2,0}$, inducing a memory covering) and the one looking $2$ steps ahead ($\cG_{2,2}$, inducing a future covering). We have thus numerically justified the benefits of the general framework introduced in this paper, with respect to classical conditions based on memory/future coverings, as in~\cite{LeeDull06,Thesis:Essick,EssickLee14,DelRosJun23b,DelRosJun23a}. 
\end{example}

\section{Conclusions}\label{Sec:Conclu}
We provided a new general model of dynamical systems evolving jointly on a continuous state space and on a discrete space given by sofic shifts. This model encapsulates many existing ones, such as switched systems and time-varying systems.

We provided a complete Lyapunov characterization of the uniform stability, and sufficient Lyapunov conditions based on coverings of sofic shifts induced by graphs. This framework allowed us to generalize, unify and relate existing literature studying graph-based/path-complete Lyapunov functions from one side and, on the other side, results involving exploitation of past/future information on the discrete state. 

We showed, in the particular case of linear switched systems, that our theory can lead to improved algorithms for the stability analysis: not only it provides an interpretation of multiple Lyapunov functions in terms of covering the switching signals, but it naturally leads to new stability conditions, that outperform previous ones, at equal computational cost.

In future research, we will push further the application of this framework in order to better understand how Lyapunov criteria compare with each other, thanks to the covering interpretation. This could lead to tailored Lyapunov criteria, that would be optimized thanks to their language-theoretic interpretation. We also plan to investigate the application of our theory to more general hybrid systems.
\appendix
\section{Technical Proofs}
In this appendix we collect some technical proofs.
\subsection{Proof of Theorem~\ref{lemma:ConverseNonLinear}}\label{appendix:ProofConverseNonLinear}
\begin{proof}
Let us prove the \emph{``if''} part first.
Suppose there exists a sd-LF  for system~\eqref{eq:System}, consider any $x\in  \R^n$ and any $\z\in Z$. Computing, using~\eqref{eq:Decreasing1} and proceeding by induction recalling~\eqref{eq:SemiGroupProperty}, we have
\[
\begin{aligned}
\cV(\Phi(k,x_0,\z), \sigma^k(\z))&=\cV(f(\Phi(k-1,x_0,\z), \sigma^{k-1}(\z)),\, \sigma^k(\z))\\& \leq \gamma \cV(\Phi(k-1,x_0,\z),\sigma^{k-1}(z))\leq \dots \leq  \gamma^k \cV(x,\z).
\end{aligned}
\]
Now using~\eqref{Eq:Sandwich1}, we have
\[
|\Phi(k,x_0,\z)|\leq \alpha_1^{-1}(\gamma^k\alpha_2(|x|)),\;\;\;\forall \;k\in \N,
\]
concluding the proof since the function $\wt \beta(s,k):=\alpha_1^{-1}(\gamma^k \alpha_2(s))$ is of class $\cKL$.

For the \emph{``only if''} part, the result is inspired by the techniques introduced in~\cite{JiaWan02,KellTeel}. Suppose system~\eqref{eq:System} is GUAS, and consider the function~$\beta\in \cKL$ introduced in Definition~\ref{defn:GUAS}. Using \cite[Proposition 7]{Son98}, for any $\gamma\in (0,1)$, there  exist functions $\rho_1,\rho_2\in \cK_\infty$ such that
\[
\beta(s,k)\leq \rho_1(\gamma^k\rho_2(s)),\;\;\forall s\in \R_{\geq 0},\;\;\forall\;k\in \N, 
\]
and thus we have
\begin{equation}\label{eq:InequalityConverseLemma}
|\Phi(k,x,\z)|\leq \rho_1(\gamma^k\rho_2(|x|)), \forall k\in \N,\;\forall x\in \R^n,\;\forall \,\z\in Z.
\end{equation}
Based on~\eqref{eq:InequalityConverseLemma}, we now construct a sd-LF for system~\eqref{eq:System}. For completeness, we actually propose two different constructions, memory- and future- based, respectively.
\\
\noindent
\emph{Future-Based:}
Let us define, for any $x\in \R^n$ and any $\z\in Z$,
\begin{equation}\label{eq:DefnCOnverseNonLinear}
\cV_+(x,\z):=\sup_{k\in \N}\left \{\frac{1}{\gamma^k}\rho_1^{-1}(|\Phi(k,x,\z)|)\;\right \}.
\end{equation}
First of all using~\eqref{eq:InequalityConverseLemma}, it can be seen that
\[
\rho_1^{-1}(|x|)\leq\cV_+(x,\z)\leq \rho_2(|x|),\;\;\;\forall \;x\in \R^n,\;\forall\;\z\in Z.
\]
For any $x\in \R^n$ and any $\z\in \Sigma^\Z$ and any $k\in \N$, using~\eqref{eq:SemiGroupProperty}, we have
\[
\frac{1}{\gamma^{k}}\rho_1^{-1}(|\Phi(k,f(x,\z),\,\sigma(\z))|)=\gamma \frac{1}{\gamma^{k+1}}\rho_1^{-1}(|\Phi(k+1,\,x,\,\z)|).
\]
Thus, computing
\[
\begin{aligned}
\cV_+(f(x,\z), \sigma(\z))&=\sup_{k\in \N}\left \{\frac{1}{\gamma^{k}}\rho_1^{-1}(|\Phi(k,f(x,\z),\sigma(\z))|)\;\right \}\\
&=\gamma \sup_{k\in \N\setminus\{0\}}\left \{\frac{1}{\gamma^{k}}\rho_1^{-1}(|\Phi(k,\,x,\,\z)|))\;\right \}\leq \gamma\cV_+(x,\z),
\end{aligned}
\]
concluding the proof.
\\
\noindent
\emph{Memory-based:}
Let us note that~\eqref{eq:InequalityConverseLemma} using~\eqref{eq:SemiGroupProperty} in particular implies
\begin{equation}\label{eq:InequalityConverseLemma2}
|x|\leq \rho_1(\gamma^{-k}\rho_2(|y|)),\;\; \forall k\in \Z_-,\;\forall x\in \R^n,\;\forall \,\z\in Z, \;\forall y\in \Phi(k,x,\z).
\end{equation}
We thus define 
\begin{equation}\label{eq:DefnCOnverseNonLinearPAST}
\cV_-(x,\z):=\inf_{k\in \Z_{-}\cup\{0\}}\inf_{y\in\Phi(k,x,\z)}\left \{\gamma^{-k}\rho_2(|y|)\;\right \}.
\end{equation}
By~\eqref{eq:InequalityConverseLemma2} we have
\[
\rho_1^{-1}(|x|)\leq\cV_-(x,\z)\leq \rho_2(|x|),\;\;\;\forall \;x\in \R^n,\;\forall\;\z\in Z.
\]
Now, consider any $\z\in Z$, any $x\in \R^n$, any $k\in \Z_-\cup\{0\}$ and any $y\in \Phi(k,x,\z)$; we note that this implies $y\in \Phi(k-1, f(x,\z),\sigma(\z))$. We thus have
\[
\begin{aligned}
\cV_-(x,\z)&=\inf_{k\in \Z_{-}\cup\{0\}}\inf_{y\in\Phi(k,x,\z)}\left \{\gamma^{-k}\rho_2(|y|)\;\right \}\geq \\&\inf_{k\in \Z_{-}}\inf_{y\in\Phi(k,f(x,\z),\sigma(\z))}\left \{\gamma^{-k-1}\rho_2(|y|)\;\right \}\geq\frac{1}{\gamma}\cV_-(f(x,\z),\sigma(\z)) ,
\end{aligned}
\]
proving~\eqref{eq:Decreasing1}.
\end{proof}
We note that we did not prove any continuity property of functions $\cV_{+}(\cdot,\z)\to \R$ and $\cV_-(\cdot,\z)\to \R$ on $\R^n\setminus \{0\}$. Indeed, the existence of continuous Lyapunov functions (in $\R^n\setminus \{0\}$) can be stated, applying  a \emph{smoothing technique} to the functions $\cV_{+}(\cdot,\z)$ and  or $\cV_-(\cdot,\z)$ constructed in proof of Theorem~\ref{lemma:ConverseNonLinear}. This technical topic is not reported here, we refer~to~\cite{KellTeel} for the details.

\subsection{Proof of Theorem~\ref{Lemma:ConverseLinear}}\label{Appendix:ProofconverseLinear}
\begin{proof}
For the implication $(2)\,\Rightarrow\,(1)$, for any $\wt \gamma\in (\gamma,1)$ consider the function $\cV:\R^n\times Z\to \R$ defined by $\cV(x,\z):=\sqrt{x^\top Q(\z)x}$, it is easy to see that with this definition we have that $\cV$ satisfies~\eqref{Eq:Sandwich1} and~\eqref{eq:Decreasing1} and that $\cV(\cdot, \z):\R^n\to \R$ is a norm, for all $\z \in Z$. Thus, as in proof of Theorem~\ref{lemma:ConverseNonLinear}, if such $Q:Z\to \mathbb{S}^n_+$ exists, $\forall \z\in Z,\;\forall k\in \N$, $\forall\;x\in \R^n$, we have
\[
|\Phi(k,x,\z)|\leq \frac{1}{M_1}\cV(\Phi(k,x,\z),\sigma^k(z))\leq \frac{1}{M_1}\wt\gamma^k\, \cV(x,\z)\leq \frac{M_2}{M_1}\wt\gamma^k |x|,\;\;\;
\]
thus proving UES with decay rate $\wt \gamma$. By arbitrariness of $\wt \gamma\in (\gamma,1)$ we conclude.

For the implication $(1)\,\Rightarrow\,(2)$, consider any $\wt \gamma\in (\gamma,1)$ and let us fix any $\wt \gamma_1\in (\gamma,\wt \gamma)$. By assumption, system~\eqref{eq:System} is UES with decay rate $\wt \gamma_1$. First, we introduce the \emph{state-transition matrix}, denoted by
\begin{equation}\label{eq:TransitionMatrix}
S(k,\z):=A(\sigma^{k-1}(\z))\cdot\cdot\cdot A(\z),\;\;\;\forall \;\z\in \Sigma^\Z,\forall \;k\in \N,
\end{equation}
with the convention $S(0,\z)=I_n$, for all $\z\in Z$.
We thus have $\Phi(k,x_0,\z)=S(k,\z)x_0$, and  by~\eqref{eq:ExponentialStabilityIneq}, we have
\begin{equation}\label{eq:BoundOnS}
|S(k,\z)|\leq M \wt\gamma_1^k,\;\;\forall \z\in Z,\;\forall k\in \N,
\end{equation}
where $|\cdot|$ denotes here the induced norm on the matrices space. 
We provide, as in the nonlinear case treated in Theorem~\ref{lemma:ConverseNonLinear}, both a memory- and a future- based construction. 
\\
\noindent
\emph{Future-based:}
Let us define
\begin{equation}\label{eq:ConverseDefinitionLinear}
Q_+(\z)=\sum_{k=0}^\infty \frac{1}{\wt  \gamma^{2k}}S(k,\z)^\top S(k,\z), \;\;\;\;\forall\;\z\in Z.
\end{equation}
It is clear that $Q_+(\z)\succeq I_n$ for all $z\in Z$ (since $I_n$ is the first term of the sum), and then, by~\eqref{eq:BoundOnS} we have
\[
Q_+(\z)\preceq M^2 \sum_{k=0}^\infty \left(\frac{\wt \gamma_1}{\wt \gamma }\right)^{2k} I_n\preceq M^2\frac{\wt \gamma^2}{\wt \gamma^2- \wt\gamma_1^2} I_n,
\]
proving~\eqref{eq:SandwichLinear}. Now, for every $\z\in Z$, we have
\[
\begin{aligned}
A(\z)^\top Q_+(\sigma(\z))A(\z)&=\sum_{k=0}^\infty \frac{1}{\wt \gamma ^{2k}}A(\z)^\top S(k, \sigma(\z))^\top S(k, \sigma(\z))A(\z)\\&=\sum_{k=1}^\infty \frac{1}{\wt \gamma^{2(k-1)}} S(k,\z)^\top S(k,\z) =\wt \gamma^2(\sum_{k=1}^\infty \frac{1}{\wt \gamma^{2k}} S(k,\z)^\top S(k,\z))\\&=\wt \gamma^2 Q_+(\z)-\wt\gamma^2 I_n\prec\wt \gamma^2 Q_+(\z), 
\end{aligned}
\]
proving~\eqref{eq:PseudoRiccatti}.
\\
\noindent
\emph{Memory-Based:}
First of all we note that, using the Schur complement (see \cite[Appendix A.5.5.]{BoydVan}), if~\eqref{eq:SandwichLinear} holds, condition~\eqref{eq:PseudoRiccatti} is equivalent to 
\begin{equation}\label{eq:ShurCompelment}
\wt \gamma^2 Q(\sigma(\z))^{-1}-A(\z)Q(\z)^{-1}A(\z)^\top \succ 0.
\end{equation}
Let us define
\begin{equation}\label{eq:ConverseDefinitionLinear2}
P(\z):=\sum_{k=0}^\infty \frac{1}{\wt \gamma^{2k}}S(k,\sigma^{-k}(\z))S(k,\sigma^{-k}(\z))^\top.
\end{equation}
Bounds as in~\eqref{eq:SandwichLinear} for $P(\z)$ can be proven, as in the future-based case, using~\eqref{eq:BoundOnS}.
Computing,  we have
\[
\begin{aligned}
A(\z)P(\z)A(\z)^\top&=\sum_{k=0}^\infty \frac{1}{\wt \gamma^{2k}}A(\z)S(k,\sigma^{-k}(\z))S(k,\sigma^{-k}(\z))^\top A(\z)^\top\\&=\sum_{k=0}^\infty \frac{1}{\wt \gamma^{2k}}S(k+1,\sigma^{-k}(\z))S(k+1,\sigma^{-k}(\z))^\top\\&=\wt \gamma^2\sum_{k=1}^\infty \frac{1}{\wt \gamma^{2k}}S(k,\sigma^{-k+1}(\z))S(k,\sigma^{-k+1}(\z))^\top=\wt \gamma^2(P(\sigma(\z))-I_n)
\end{aligned}
\]
and thus 
\[
\wt \gamma^2P(\sigma(\z))- A(\z)P(\z)A(\z)^\top=\wt \gamma^2 I_n\succ 0,
\]
defining $Q_-(\z)=P(\z)^{-1}$, recalling~\eqref{eq:ShurCompelment}, and the fact that $aI_n\prec R\prec bI_n$ if and only if $b^{-1}I_n\prec R^{-1}\prec a^{-1}I_n$, for any $a\leq b$ and any $R=R^\top\in \R^{n\times n}$, we conclude.
\end{proof}
\subsection{Proof of Theorem~\ref{lemma:FiniteCoveringLemma}}\label{appendix:ProofFinite}
\begin{proof}
The ``if'' part trivially follows from Proposition~\ref{prop:FromFiniteToInfinite}. We prove the ``only if'' part, and only in the ``future based'' version.
As in the proof of Theorem~\ref{lemma:ConverseNonLinear}, the GUAS property is equivalent to the existence of a $\gamma\in [0,1)$ and 
 functions $\rho_1,\rho_2\in \cK_\infty$ such that
\[
|\Phi(k,x_0,\z)|\leq \rho_1(\gamma^k\rho_2(|x_0|)),\;\;\forall x_0\in \R^n, \forall \z\,\in Z,\;\;\forall\;k\in \N.
\]
Let us define, for every $C\in \cC$,
\[
W_+(x,C)=\sup_{\substack{\z\in C\\ k\in \N}}\left \{\frac{1}{\gamma^k}\rho_1^{-1}(|\Phi(k,x,\z)|)\;\right \}.
\]
We trivially have $\rho_1(|x|)\leq W_+(x,C)\leq \rho_2(|x|)$, for all $C\in \cC$ and all $x\in \R^n$. Now consider $C,D\in \cC$ and $i\in \Sigma$ and any $x\in \R^n$. We note that by~\eqref{eq:SuffPropagation1}, for any $\z\in D$ we have that exists $\w\in C$ such that $i \cdot \z^+= \w^+$. Thus computing
\[
\begin{aligned}
W_+(f(x,i),D)&=\sup_{\substack{\z\in D\\ k\in \N}}\left \{\frac{1}{\gamma^k}\rho_1^{-1}(|\Phi(k,f(x,i),\z)|)\;\right \}\leq\sup_{\substack{\w\in C\\ k\in \N}}\left \{\frac{1}{\gamma^k}\rho_1^{-1}(|\Phi(k+1,x,\w)|)\;\right \}\\&=\gamma\sup_{\substack{\w\in C\\ k\in \N\setminus\{0\}}}\left \{\frac{1}{\gamma^k}\rho_1^{-1}(|\Phi(k,x,\w)|)\;\right \}\leq \gamma W_+(x,C),
\end{aligned}
\]
concluding the proof. For a backward construction using~\eqref{eq:SuffPropagation2}, the reasoning is similar to the one in the proof of Theorem~\ref{lemma:ConverseNonLinear}, and it is not reported here.
\end{proof}

\bibliography{biblio} 
\bibliographystyle{plain}

\end{document}